\numberwithin{equation}{section}
\theoremstyle{plain}
\newtheorem{theorem}{Theorem}[section]
\newtheorem{proposition}[theorem]{Proposition}
\newtheorem{corollary}[theorem]{Corollary}
\newtheorem{lemma}[theorem]{Lemma}
\theoremstyle{definition}
\newtheorem*{definition}{Definition}
\theoremstyle{remark}
\newtheorem{remark}[theorem]{Remark}
\newtheorem{question}[theorem]{Question}
\begin{document}

\title[The parallel transport map over homogeneous space]{The parallel transport map over \\ reductive homogeneous space}
\author[M. Morimoto]{Masahiro Morimoto}

 \address{Department of Mathematical Sciences, Tokyo Metropolitan University, 1-1 Minami-Osawa, Hachioji-shi, Tokyo 192-0397, JAPAN}

 \email{morimoto.mshr@gmail.com}

 \thanks{This work was partly supported by Grant-in-Aid for JSPS Research fellows (No.\ 23KJ1793), by MEXT Promotion of Distinctive Joint Research Center Program JPMXP0723833165 and by Osaka Metropolitan University Strategic Research Promotion Project (Development of International Research Hubs).}

 \makeatletter
 \@namedef{subjclassname@2020}{%
   \textup{2020} Mathematics Subject Classification}
 \makeatother

  \keywords{parallel transport map; reductive homogeneous space; natural torsion-free connection; affine submersion with horizontal distribution; compact operator.}
 \subjclass[2020]{53C40, 53A15, 58B15}

\maketitle


\begin{abstract}
We show that the parallel transport map over a reductive homogeneous space with natural torsion-free connection becomes an affine submersion with horizontal distribution. This generalizes one of the main results in the author's previous paper in the case of affine symmetric spaces. We also prove the compactness of the shape operators of the submanifold lifted by the parallel transport map. This improves a previous result by the author and generalizes some results of Terng-Thorbergsson and of Koike.  Furthermore we propose two definitions for the regularized mean curvatures of affine Fredholm submanifolds in Hilbertable spaces and discuss their relations to the parallel transport map. In particular, each fiber of the parallel transport map over a reductive homogeneous space is shown to be minimal in both senses.\end{abstract}





\section{Introduction}

In the 1980s, Palais and Terng \cite{PT88, Ter89} started to study submanifolds in Hilbert spaces. 
For an immersed submanifold $M$ of a (real, separable) Hilbert space $V$, the first and second fundamental forms, the shape operators and the normal connection are defined in the same way as in the finite dimensional case. In particular, for each normal vector $\xi$ at $p \in M$, the shape operator $A_\xi$ is a self-adjoint operator on the tangent space $T_p M$. 
Palais and Terng especially considered the case that $M$ has finite codimension in $V$ and showed that the following conditions are equivalent:
\begin{enumerate}
\item[(a)] The end point map $\eta: T^\perp M \rightarrow V$, $(p, \xi) \mapsto \varphi(p) + \xi$ is a Fredholm map, where $\varphi : M \rightarrow V$ denotes the immersion.
\item[(b)] The shape operator $A_\xi$ is a compact operator for any $(p, \xi) \in T^\perp M$.
\end{enumerate}
If these equivalent conditions are satisfied, then $M$ is called a \textit{Fredholm} submanifold of $V$. By the condition (b) we can use the spectral theory of compact self-adjoint operators to study the principal curvatures of Fredholm submanifolds. They also considered the \textit{proper} condition in view of Morse theory and investigated \textit{proper Fredholm} (PF) submanifolds in $V$.

In their study of PF submanifolds in Hilbert spaces, Palais and Terng \cite{PT88, Ter89} gave examples of PF submanifolds which are orbits of a Lie group action. To study those examples, they introduced a natural fibration $\Phi : V_\mathfrak{g} \rightarrow G$ defined over a connected compact Lie group $G$ with a bi-invariant Riemannian metric. Here $V_\mathfrak{g} := L^2([0,1], \mathfrak{g})$ denotes the Hilbert space of all $L^2$-maps from the interval $[0,1]$ to the Lie algebra $\mathfrak{g}$ of $G$. The map $\Phi$ is nowadays called the \textit{parallel transport map}. Later, Terng and Thorbergsson \cite{TT95} considered, for a Riemannian symmetric space $N = G/K$ of compact type with projection $\pi : G \rightarrow N$, the composition $\pi \circ \Phi : V_\mathfrak{g} \rightarrow G \rightarrow N$. We write $\Phi_{N}$ for $\pi \circ \Phi$ and call it the parallel transport map over $N$. They showed that $\Phi$ (and thus $\Phi_N$) is a Riemannian submersion. Moreover they showed that for a compact submanifold $M$ of $N$, the inverse image $\hat{M} := \Phi_{N}^{-1}(M)$ is a PF submanifold of $V_\mathfrak{g}$ \cite[Lemma 5.8]{TT95}. 
Furthermore they showed that under some assumptions, $M$ is equifocal if and only if $\hat{M}$ is isoparametric. Based on these results, they investigated equifocal submanifolds in $N$ via isoparametric PF submanifolds in the \textit{flat} space $V_\mathfrak{g}$.

Note that all the submanifolds considered above are \textit{Riemannian} submanifolds. On the other hand, in connection with the parallel transport map over a non-compact space, we will encounter pseudo-Riemannian submanifolds or more generally, (affine) submanifolds with transversal bundles. Here a transversal bundle of an immersed submanifold $M$ means a vector bundle over $M$ complementary to $TM$ (see \cite[Section 3]{M6} for details).
Thus we generalize the above conditions (a) and (b) to such cases: 
Let $V$ be a \textit{Hilbertable} space, that is, a (real, separable) topological vector space whose topology is induced by a complete inner product. Equip $V$ with the flat connection $D$. 
Let $\varphi : M \rightarrow V$ be an immersed submanifold (i.e.\ immersion) of finite codimension with transversal bundle $\mathcal{W}$. 
The conditions (a) and (b) are generalized as follows:
\begin{enumerate}
\item[(A)] The end point map $\eta : \mathcal{W} \rightarrow V$, $(p, \xi) \mapsto \varphi(p) + \xi$ is a Fredholm map of index $0$ 
(i.e. $\dim \operatorname{Ker} (d\eta)_{(p, \xi)} = \dim \operatorname{Coker} (d\eta)_{(p, \xi)} < \infty$ for any $(p, \xi) \in \mathcal{W}$).

\item[(B)] The shape operator $A_\xi$ is a compact operator for any $(p, \xi) \in \mathcal{W}$.
\end{enumerate}
It follows that (A) is equivalent to the condition that $\operatorname{id} - A_\xi$ is a Fredholm operator of index $0$ for any $(p, \xi) \in \mathcal{W}$ \cite[Remark 4.5]{M6}. Thus, in the Riemannian case, the property of index $0$ is automatically satisfied since $A_\xi$ is self-adjoint. Moreover it follows from the general theory of Fredholm operators that (B) implies (A). However it is not clear whether (A) implies (B) or not. Note also that there seems to be no natural definition for a non-Riemannian submanifold of a Hilbertable space to be proper.

After the work of Terng and Thorbergsson \cite{TT95}, Koike \cite{Koi04} studied the parallel transport map over a Riemannian symmetric space $N = G/K$ of non-compact type. He equipped $G$ with the induced bi-invariant \textit{pseudo-Riemannian} metric and $V_\mathfrak{g}$ with the induced \textit{indefinite} $L^2$-inner product. 
He defined the parallel transport map $\Phi_{N} : V_\mathfrak{g} \rightarrow G \rightarrow N$ by a similar way and showed that it is a pseudo-Riemannian submersion. Moreover, he showed that for a submanifold $M$ of $N$, the pseudo-Riemannian submanifold $\hat{M} := \Phi_{N}^{-1}(M)$ of $V_\mathfrak{g}$ (with normal bundle $T^\perp \hat{M}$) satisfies (B) \cite[Theorem 5.8]{Koi04}. Based on these results, he extended the result of Terng and Thorbergsson \cite{TT95} to the case of Riemannian symmetric spaces of non-compact type. 

Recently, the author \cite{M6} studied the parallel transport map over a symmetric space $N = G/K$ which is not necessarily Riemannian or semisimple. He equipped $N$ with the canonical connection $\nabla^N$ and the \textit{Hilbertable} space $V_\mathfrak{g}$ with the flat connection $D$. He defined the parallel transport map $\Phi_N : V_\mathfrak{g} \rightarrow G \rightarrow N$ by a similar way and showed that it is an affine submersion with horizontal distribution $\mathcal{H}$ in the sense of Abe and Hasegawa \cite{AH01} and  the fundamental tensor $\mathcal{A}$ restricted to $\mathcal{H} \times \mathcal{H}$ is alternating \cite[Theorem 1.1]{M6}. Based on these properties, he showed  that for an immersed submanifold $\varphi : M \rightarrow N$  with transversal bundle $\mathcal{W}$, the lifted submanifold $\hat{\varphi} : \hat{M} \rightarrow V_\mathfrak{g}$ with lifted transversal bundle $\hat{\mathcal{W}}$ satisfies (A) \cite[Proposition 1.2]{M6}. (Here $\hat{M} := \{(p, u) \in M \times V_\mathfrak{g} \mid \varphi(p) = \Phi_N(u)\}$, $\hat{\varphi} (p, u) := u$ and $\hat{\mathcal{W}}$ is the horizontal lift of $\mathcal{W}$. If $M \subset N$ is a submanifold, then $\hat{M}$ is identified with $\Phi_N^{-1}(M)$.)
However, he was not able to show (B). Moreover he was not able to investigate the parallel transport map $\Phi_N$ over a reductive homogeneous space $N$ because he only considered the canonical connection on $N$, which is not torsion-free when $N$ is a non-symmetric space. 

The main purpose of the present paper is to investigate the parallel transport map $\Phi_N$ over a reductive homogeneous space $N = G/K$ and to show the property (B) for the lifted submanifold $\hat{\varphi} : \hat{M} \rightarrow V_\mathfrak{g}$. To do this we consider another affine connection on $N$, namely the \textit{natural torsion-free connection} \cite{Nom54, KNII}. The canonical connection and the natural torsion-free connection share the same geodesics. Those connections coincide when $N$ is a symmetric space. We will prove the following theorem  on the affine geometric properties of $\Phi_N$, which generalizes the author's previous result \cite[Theorem 1.1]{M6} in the case of affine  symmetric spaces. 

\begin{theorem}\label{mainthm1}\normalsize
Let $N = G/K$ be a reductive homogeneous space with decomposition $\mathfrak{g} = \mathfrak{k} \oplus \mathfrak{p}$ and with natural torsion-free connection $\nabla^{N}$. Then the parallel transport map $\Phi_{N} : (V_\mathfrak{g}, D) \rightarrow (N, \nabla^{N})$ is an affine submersion with horizontal distribution $\mathcal{H}^{\Phi_N}$. Moreover the fundamental tensor $\mathcal{A}^{\Phi_N}$ restricted to $\mathcal{H}^{\Phi_N} \times \mathcal{H}^{\Phi_N}$ is alternating.
\end{theorem}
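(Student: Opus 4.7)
The plan is to adapt the strategy of \cite[Theorem 1.1]{M6}, where the analogous statement was proved for symmetric $N$ with the canonical connection $\nabla^c$, to the present reductive setting with the natural torsion-free connection $\nabla^N$. The essential new feature is that $\nabla^c$ has non-trivial torsion on a non-symmetric reductive homogeneous space, so I must check that the $\tfrac{1}{2}$-symmetrization distinguishing $\nabla^N$ from $\nabla^c$ manifests itself precisely in the parallel transport computation. For $u \in V_\mathfrak{g}$ let $g_u : [0,1] \to G$ denote the unique curve with $g_u(0) = e$ and $g_u(t)^{-1} g_u'(t) = u(t)$, so that $\Phi(u) = g_u(1)$ and
\[
(d\Phi)_u(X) = (L_{g_u(1)})_* \int_0^1 \on{Ad}(g_u(t)^{-1}) X(t)\, dt.
\]
Using the reductive splitting $\mathfrak{g} = \mathfrak{k} \oplus \mathfrak{p}$, I would define
\[
\mathcal{H}^{\Phi_N}_u := \{X \in V_\mathfrak{g} \mid X(t) = \on{Ad}(g_u(t))\xi \text{ for some } \xi \in \mathfrak{p}\},
\]
a finite-codimension complement of $\ker (d\Phi_N)_u$; the horizontal lift of a vector field $Y$ on $N$ is then $\widehat{Y}_u(t) = \on{Ad}(g_u(t))\eta$, with $\eta \in \mathfrak{p}$ representing $Y_{\Phi_N(u)}$ under the standard identification $T_{\Phi_N(u)} N \cong \mathfrak{p}$ coming from $\pi \circ L_{\Phi(u)}$.

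To verify the affine submersion identity $(d\Phi_N)_u (D_{\widehat{X}} \widehat{Y})_u = (\nabla^N_X Y)_{\Phi_N(u)}$, I would differentiate $\widehat{Y}$ as a $V_\mathfrak{g}$-valued map of $u$ in the direction $\widehat{X}_u$. The variation of parameters for the Cartan equation gives $\partial_s g_{u_s}(t)|_{s=0} = g_u(t) \int_0^t \on{Ad}(g_u(\tau)^{-1}) \widehat{X}_u(\tau)\, d\tau$ along $u_s = u + s \widehat{X}_u$, which reduces $\partial_s \widehat{Y}_{u_s}(t)|_{s=0}$ to a commutator integral of the form $\on{Ad}(g_u(t))\bigl[\int_0^t \on{Ad}(g_u(\tau)^{-1}) \widehat{X}_u(\tau)\, d\tau,\ \eta\bigr]$. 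Projecting by $(d\Phi_N)_u$ and comparing with the Nomizu formula for the natural torsion-free connection,
\[
(\nabla^N_{X^*} Y^*)_o = -\tfrac{1}{2}[X, Y]_\mathfrak{p}, \qquad X, Y \in \mathfrak{p},
\]
one can then match both sides term by term. The $-\tfrac{1}{2}[X,Y]_\mathfrak{p}$ correction, absent in \cite{M6} because $[\mathfrak{p}, \mathfrak{p}] \subset \mathfrak{k}$ in the symmetric case, must arise from the $\mathfrak{p}$-projection of the symmetrized commutator integral.

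Granted the affine submersion identity, the alternating property of $\mathcal{A}^{\Phi_N}$ on horizontals follows by a soft argument: since both $D$ and $\nabla^N$ are torsion-free, the identity $D_{\widehat{X}} \widehat{Y} - D_{\widehat{Y}} \widehat{X} = [\widehat{X}, \widehat{Y}]$ projects under $d\Phi_N$ to $\nabla^N_X Y - \nabla^N_Y X = [X, Y]$, so the horizontal part of $[\widehat{X}, \widehat{Y}]$ coincides with $\widehat{[X, Y]}$. Hence, denoting the vertical projection by $\mathcal{V}$, the alternating condition $\mathcal{A}^{\Phi_N}(\widehat{X}, \widehat{Y}) = -\mathcal{A}^{\Phi_N}(\widehat{Y}, \widehat{X})$ reduces to $\mathcal{V}(D_{\widehat{X}} \widehat{Y}) = \tfrac{1}{2} \mathcal{V}[\widehat{X}, \widehat{Y}]$, which I would verify directly from the explicit form $\widehat{Y}(t) = \on{Ad}(g_u(t))\eta$ by symmetrizing in $(X, Y)$ and invoking the antisymmetry of the resulting $\mathfrak{k}$-valued commutator.

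The hard part is the matching step in the second paragraph: one must separate the $\mathfrak{k}$- and $\mathfrak{p}$-components of iterated commutator integrals and verify that each piece corresponds precisely to the relevant piece of the natural torsion-free connection. In the symmetric case of \cite{M6}, the inclusion $[\mathfrak{p}, \mathfrak{p}] \subset \mathfrak{k}$ makes the $\mathfrak{p}$-component of these commutators vanish and simplifies the bookkeeping dramatically; in the general reductive case, the nontrivial $[\mathfrak{p}, \mathfrak{p}]_\mathfrak{p}$ component must conspire with the Nomizu formula for $\nabla^N$, and tracking this coupling carefully is where the essential new work resides.
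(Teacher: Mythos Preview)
Your approach is genuinely different from the paper's. Rather than computing $D_{\widehat{X}}\widehat{Y}$ directly at a general point $u$, the paper factors $\Phi_N=\pi\circ\Phi$ and proves two separate facts: the previously established Theorem~\ref{previousemain} that $\Phi:(V_\mathfrak{g},D)\to(G,\nabla^G)$ is an affine submersion, and a new Lemma~\ref{keylem} that $\pi:(G,\nabla^G)\to(N,\nabla^N)$ is an affine submersion with horizontal distribution $\mathcal{H}^\pi(a)=dl_a(\mathfrak{p})$. The second lemma is the essential new content, and its proof uses Nomizu's local frame $Y^\#$ and the explicit formula $\nabla^G_X Y=\tfrac12[X,Y]$ from Corollary~\ref{canoconng}. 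Equivariance under $P(G,G\times\{e\})$ then reduces everything to the single point $\hat 0$. The alternating property of $\mathcal{A}^{\Phi_N}$ is obtained by decomposing $\mathcal{A}^{\Phi_N}(\hat X,\hat Y)=\mathcal{A}^\Phi(\hat X,\hat Y)\oplus\mathcal{A}^\pi(X,Y)$ and invoking the alternating property of each piece separately; no further computation is needed.

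Your direct route can be made to work, but the sketch has a real gap. When you differentiate $\widehat{Y}_{u_s}(t)=\on{Ad}(g_{u_s}(t))\eta(u_s)$ along $u_s=u+s\widehat{X}_u$, there are \emph{two} contributions: the one you wrote coming from $\partial_s\on{Ad}(g_{u_s}(t))$, and a second term $\on{Ad}(g_u(t))\,\partial_s\eta(u_s)|_{s=0}$ coming from the fact that $\eta$ represents $Y_{\Phi_N(u)}$ and hence varies with $u$. The second term is precisely what encodes the Lie-derivative part $[X^*,Z]_{eK}$ in Nomizu's formula (Corollary~\ref{canocon9}); without it you will match only the $\alpha(X,Y)=\tfrac12[X,Y]_\mathfrak{p}$ piece and not the full connection. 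Concretely, your comparison with $(\nabla^N_{X^*}Y^*)_o=-\tfrac12[X,Y]_\mathfrak{p}$ presupposes that the horizontal lift of the Killing field $Y^*$ is $\on{Ad}(g_u)\eta$ with \emph{constant} $\eta$, which is false: the identification $T_{\Phi_N(u)}N\cong\mathfrak p$ you invoke is via $\pi\circ L_{\Phi(u)}$, whereas $Y^*$ is built from right translations, so $\eta$ moves. The factorization approach sidesteps this entirely, because the variation of $\eta$ is absorbed into the finite-dimensional Lemma~\ref{keylem} on $G\to N$, where one can work with Nomizu's local frame $Y^\#$ (for which the analog of $\eta$ really is constant along $\tilde U$) rather than with Killing fields. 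A secondary issue: your horizontal distribution $\{\on{Ad}(g_u(\cdot))\xi:\xi\in\mathfrak p\}$ does not coincide with the paper's $\mathcal{H}^{\Phi_N}(g*\hat0)=\on{Ad}(g)\hat{\mathfrak p}$ for $g\in P(G,G\times K)$, since $g_u\notin P(G,G\times K)$ in general; you should verify that your candidate agrees with \eqref{hdist98} before proceeding.
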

Here $\mathfrak{k}$ denotes the Lie algebra of $K$ and $\mathfrak{p}$ an $\operatorname{Ad}(K)$-invariant subspace of $\mathfrak{g}$. The decomposition $\mathfrak{g} = \mathfrak{k} \oplus \mathfrak{p}$ induces a horizontal distribution $\mathcal{H}^{\Phi_N}$ of $\Phi_{N}$, see \eqref{hdist98} for details. To prove Theorem \ref{mainthm1} we need an accurate understanding of the work of Nomizu \cite{Nom54}. We will give a brief survey of this in modern terminology (Section \ref{affineconn}).

We will prove the following theorem on the property (B) for the lifted submanifold. This improves the author's result \cite[Proposition 1.2]{M6} and generalizes the results of Terng-Thorbergsson \cite[Lemma 5.8]{TT95} when $N$ is compact and of Koike \cite[Theorem 5.8]{Koi04} when $N$ is a Riemannian symmetric space of non-compact type.
\begin{theorem}\label{mainthm2}\normalsize
Let $N = G/K$ be a reductive homogeneous space with decomposition $\mathfrak{g} = \mathfrak{k} \oplus \mathfrak{p}$ and $\varphi :M \rightarrow N$ be an immersed submanifold with transversal bundle $\mathcal{W}$. Then the lifted submanifold $\hat{\varphi} : \hat{M} \rightarrow V_\mathfrak{g}$ with lifted transversal bundle $\hat{\mathcal{W}}$ satisfies \textup{(B)} (and thus \textup{(A)}).
\end{theorem}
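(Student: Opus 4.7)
The strategy is to establish the stronger property (B) directly, since (B) implies (A) by \cite[Remark~4.5]{M6}. The plan is to compute each shape operator of $\hat{M}$ explicitly and display it as a sum of a finite-rank operator and an integral operator of Hilbert--Schmidt type.

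Fix $(p,u) \in \hat{M}$. The vertical--horizontal splitting of $\Phi_N$ furnished by Theorem~\ref{mainthm1} gives
\[
T_u V_\mathfrak{g} \;=\; \mathcal{V}_u \oplus \mathcal{H}^{\Phi_N}_u,
\qquad
T_{(p,u)} \hat{M} \;=\; \mathcal{V}_u \oplus \widetilde{T_p M},
\qquad
\hat{\mathcal{W}}_{(p,u)} \;=\; \widetilde{\mathcal{W}_p},
\]
where tildes denote horizontal lift through $u$; the summand $\widetilde{T_p M}$ is finite-dimensional, while $\mathcal{V}_u$ is an infinite-dimensional closed subspace. For $\hat{\xi} \in \hat{\mathcal{W}}_{(p,u)}$ covering $\xi \in \mathcal{W}_p$, I would then compute $A_{\hat{\xi}}$ block by block in this decomposition.

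On $\widetilde{T_p M}$, the Abe--Hasegawa formulas for an affine submersion with horizontal distribution \cite{AH01}, combined with Theorem~\ref{mainthm1}, identify $A_{\hat{\xi}}$ with the horizontal lift of the shape operator $A_\xi : T_p M \to T_p M$ of $M$ in $N$, up to a correction built from $\mathcal{A}^{\Phi_N}$ evaluated on horizontal vectors. Since $\dim T_p M < \infty$, this block has finite rank. For the vertical--vertical block, I would use the ODE description of $\Phi_N$ together with the natural torsion-free connection $\nabla^N$ to write
\[
(A_{\hat{\xi}} V)(t) \;=\; \int_0^1 K(t,s)\, V(s)\, ds, \qquad V \in \mathcal{V}_u,
\]
where the kernel $K$ is built from $\operatorname{Ad}$ of the group element $g_u(\cdot)$ developed from $u$, from the $\mathfrak{p}$-component of $\hat{\xi}$, and from brackets in $\mathfrak{g}$ reflecting $\nabla^N$. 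Continuity of $K$ on $[0,1]^2$ will make this block Hilbert--Schmidt and hence compact; the cross blocks are similar integral operators with finite-dimensional source or target and are again compact. Summing the three blocks gives compactness of $A_{\hat{\xi}}$.

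The main obstacle I anticipate is precisely the explicit derivation and continuity of the kernel $K$ in the reductive, non-symmetric setting. In the symmetric case treated in \cite{TT95,Koi04,M6}, the inclusion $[\mathfrak{p},\mathfrak{p}] \subset \mathfrak{k}$ causes many terms in $K$ to vanish and yields a manifestly continuous kernel. In the general reductive case, the $\mathfrak{p}$-component of $[\mathfrak{p},\mathfrak{p}]$ produces additional contributions, and it is the passage from the canonical connection to the natural torsion-free connection---which preserves geodesics but symmetrizes the torsion---that should absorb these new terms while keeping $K$ continuous. Carrying out this bookkeeping carefully, and confirming at each step that the resulting operators are bounded on $L^2$, appears to be the core technical work.
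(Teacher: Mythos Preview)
Your block decomposition and reduction to the vertical--vertical piece match the paper's approach exactly (its equation \eqref{shapedecomp}). From there, however, the paper is considerably more direct. It first reduces to $K=\{e\}$ via $\Phi_N^{-1}(M)=\Phi^{-1}(\pi^{-1}(M))$, then uses the explicit formula $A^F_{\hat\xi}(Z')=[Z,\hat\xi]-\int_0^1[Z(t),\xi]\,dt$ for the fiber shape operator (with $Z(t)=\int_0^t Z'(s)\,ds$) and proves compactness by a bare-hands Sobolev argument: if $\{Z_n'\}$ is $L^2$-bounded then $\{Z_n\}$ is $H^1$-bounded, so the compact embedding $H^1\hookrightarrow C^0$ yields a uniformly convergent subsequence, and the formula immediately gives $L^2$-convergence of $A^F_{\hat\xi}(Z_{n(k)}')$. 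No kernel, no connection on $N$, and no appeal to Theorem~\ref{mainthm1} are required.

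Your integral-kernel route also works, but two points deserve correction. First, substituting $Z(t)=\int_0^t V(s)\,ds$ into the formula above gives $A^F_{\hat\xi}(V)(t)=\int_0^1\bigl(\mathbf{1}_{\{s\le t\}}-(1-s)\bigr)[V(s),\xi]\,ds$; this kernel is \emph{bounded} on $[0,1]^2$ but not continuous (it jumps across the diagonal), so your stated justification for Hilbert--Schmidt is wrong even though the conclusion stands. Second---and more importantly---your anticipated obstacle is a phantom. The fiber shape operator involves only the Lie bracket in $\mathfrak{g}$ (and, if one does not reduce to $K=\{e\}$, the projection onto $\mathfrak{p}$); it never sees $\nabla^N$. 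The reductive-versus-symmetric distinction and the passage from the canonical to the natural torsion-free connection play no role whatsoever in the compactness argument. Likewise, Theorem~\ref{mainthm1} and the Abe--Hasegawa formulas are unnecessary here: the finite rank of the other three blocks follows simply from $\dim\mathcal{H}^{\Phi_N}_u<\infty$, and the identification of the horizontal--horizontal block with $A^M_\xi$ (Remark~\ref{remimpor}) is additional information not needed for (B).
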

Note that the proofs of Terng-Thorbergsson \cite{TT95} and Koike \cite{Koi04} do not apply to our general case since their proofs rely on calculations of the eigenvalues of the shape operators of $\hat{M}$, which are based on the root space decomposition associated to the compact Lie group $G$ or the Riemannian symmetric space $N$. We give a different, simpler proof which does not rely on such calculations or decompositions.

Based on Theorems \ref{mainthm1} and \ref{mainthm2} we furthermore study the mean curvature (i.e.\ the trace of the shape operator) of an immersed submanifold of a Hilbertable space satisfying (B). Note that the shape operators are not of trace class in general. 
In the Riemannian case, there are three definitions for the mean curvatures, see King-Terng \cite{KT93}, Heintze-Liu-Olmos \cite{HLO06} and Koike \cite{Koi02}. In the affine case, we will propose two definitions:\ one generalizes that of Heintze-Liu-Olmos \cite{HLO06} and the other is valid only for submanifolds lifted by the parallel transport map. Based on the latter definition, we will prove Theorem \ref{mainthm3}, which can be thought of an extension of results in the Riemannian case (see Remark \ref{main3remark}).  We will also show that each fiber of the parallel transport map over a reductive homogeneous space is minimal (i.e.\ having zero mean curvatures) regarding both of our definitions (Proposition \ref{minimalfiber} and Corollary \ref{minimalfiber89}).

The paper is organized as follows: In Section \ref{affineconn} we give a brief survey of the work of Nomizu \cite{Nom54}. In Section \ref{ptm} we study the parallel transport map over a reductive homogeneous space and prove Theorem \ref{mainthm1}. In Section \ref{compactshape} we study  the shape operators of fibers of the parallel transport map and prove Theorem \ref{mainthm2}. 
In Sections \ref{minimality} and \ref{finalsec} we consider the mean curvatures of submanifolds in Hilbertable spaces satisfying (B) and study their relations to the parallel transport map.

\section{Invariant affine connections on reductive homogeneous spaces}\label{affineconn}

In this section, we give a brief survey on invariant affine connections on reductive homogeneous spaces. We essentially follow the paper of Nomizu \cite{Nom54} where a simple and direct formulation is given. We also refer to Kobayashi-Nomizu \cite{KNII} for a formulation based on the theory of connections on principal fiber bundles.

Let $G$ be a connected Lie group and $K$ a closed subgroup of $G$. The coset manifold $N := G/K$ is called a \textit{homogeneous space}. The projection $\pi : G \rightarrow N$ becomes a principal $K$-bundle. Write $\mathfrak{g}$ and $\mathfrak{k}$ for the Lie algebras of $G$ and $K$ respectively. $N$ is called \textit{reductive} if there exists an $\operatorname{Ad}(K)$-invariant subspace $\mathfrak{p}$ of $\mathfrak{g}$ satisfying $\mathfrak{g} = \mathfrak{k} \oplus \mathfrak{p}$. 
Note that $\mathfrak{p}$ is isomorphic to $T_{eK} N$ and the representation $\operatorname{Ad} : K \rightarrow GL(\mathfrak{p})$ is equivalent to the isotropy representation $K \rightarrow GL(T_{eK} N)$. 
When we speak of a reductive homogeneous space, we always fix such a decomposition $\mathfrak{g} = \mathfrak{k} \oplus \mathfrak{p}$. 
Write $X_\mathfrak{k}$ and $X_\mathfrak{p}$ for the $\mathfrak{k}$- and $\mathfrak{p}$-components of $X \in \mathfrak{g}$ respectively. 

Let $N = G/K$ be a reductive homogeneous space with decomposition $\mathfrak{g} = \mathfrak{k} \oplus \mathfrak{p}$. The following lemma is elementary and not explicitly stated in \cite{Nom54}. However this is important in understanding the formulation.
\begin{lemma}\label{lem1}\normalsize
There exists a local trivialization $\phi : \pi^{-1}(U) \rightarrow U \times K$ around $eK$ such that $\tilde{U} := \phi^{-1}(U \times \{e\})$ is equal to $\exp W$ for an open neighborhood $W$ of $0$ in $\mathfrak{p}$.
\end{lemma}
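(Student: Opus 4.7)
The plan is to produce the desired trivialization from a local smooth section $\sigma : U \to G$ of the principal bundle $\pi : G \to N$ whose image is precisely $\exp W$, and then set $\phi(g) := (\pi(g), \sigma(\pi(g))^{-1} g)$ in the usual manner.

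First I would build $\sigma$ by applying the inverse function theorem to the map
\[
f : \mathfrak{p} \longrightarrow N, \qquad f(X) := \pi(\exp X).
\]
Under the canonical identifications $T_0 \mathfrak{p} \cong \mathfrak{p}$ and, via the reductive decomposition $\mathfrak{g} = \mathfrak{k} \oplus \mathfrak{p}$, $T_{eK} N \cong \mathfrak{g}/\mathfrak{k} \cong \mathfrak{p}$, the differential $(df)_0$ is the identity on $\mathfrak{p}$. The inverse function theorem therefore yields an open neighborhood $W$ of $0 \in \mathfrak{p}$, which I may shrink freely, such that $f|_W$ is a diffeomorphism onto an open neighborhood $U := f(W)$ of $eK$ in $N$. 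Setting $\sigma := \exp \circ (f|_W)^{-1} : U \to G$ gives a smooth map satisfying $\pi \circ \sigma = \operatorname{id}_U$ and $\sigma(U) = \exp W$.

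Next I would define
\[
\phi : \pi^{-1}(U) \longrightarrow U \times K, \qquad \phi(g) := \bigl(\pi(g),\, \sigma(\pi(g))^{-1} g \bigr).
\]
For $g \in \pi^{-1}(U)$, the element $\sigma(\pi(g))^{-1} g$ indeed lies in $K$ since both factors project to the same point under $\pi$. This map is smooth and has smooth inverse $(p, k) \mapsto \sigma(p) k$, hence is a diffeomorphism and a local trivialization of the principal $K$-bundle $\pi$. Finally, by construction,
\[
\phi^{-1}(U \times \{e\}) = \{\sigma(p) : p \in U\} = \sigma(U) = \exp W,
\]
which is the required description of $\tilde{U}$.

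There is no genuine obstacle here; the only point that must be handled with care is the identification of $(df)_0$ with the inclusion $\mathfrak{p} \hookrightarrow \mathfrak{g} \to \mathfrak{g}/\mathfrak{k}$, so that this differential is seen to be a linear isomorphism and the inverse function theorem applies. Everything else is standard manipulation with local sections of principal bundles.
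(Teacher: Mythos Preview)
Your proof is correct and follows essentially the same approach as the paper: apply the inverse function theorem to $\pi\circ\exp|_{\mathfrak{p}}$ to obtain a local section of $\pi$ with image $\exp W$, and build the trivialization from it in the standard way. The paper's argument is identical in substance, only more terse.
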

\begin{proof}\normalsize
By the inverse function theorem there exists an open neighborhood $W$ of $0 \in \mathfrak{p}$ such that $\pi:  \exp W \rightarrow \pi(\exp W) = : U$ is a diffeomorphism. This defines the local smooth section of $\pi : G \rightarrow N$ which defines the desired local trivialization.
\end{proof}

Fix the above local trivialization. Then $\pi : \tilde{U} \rightarrow U$ is a diffeomorphism. For each $X \in \mathfrak{p}$ the vector field $X^\#$ on $U$ is defined by $X^\#_{aK} := d \pi (X_{a})$ where $a \in \tilde{U}$. Then 
\begin{equation}\label{leftinv}
X^{\#}_{aK} = dL_a(X^\#_{eK})
\end{equation}
for $a \in \tilde{U}$. Here $L_a$ denotes the left translation on $N = G/K$ by $a$. The following fact is essentially shown in \cite[p.\ 42]{Nom54} (where $X^\#$ is written as $X^*$).

\begin{lemma}[Nomizu \cite{Nom54}]\label{lem2}\normalsize
For each $k \in K$ there exists a neighborhood $\tilde{U}_1$ of $e \in \tilde{U}$ satisfying $k \tilde{U}_1 k^{-1} \subset \tilde{U}$. Moreover, 
\begin{enumerate}
\item $L_k(U_1) \subset U$ where $U_1 := \pi(\tilde{U}_1)$, 
\item $dL_k (X^\#) = (\operatorname{Ad}(k)X)^\#$ holds on $L_k(U_1)$ for any $X \in \mathfrak{p}$.
\end{enumerate}
\end{lemma}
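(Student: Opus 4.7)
The plan is to produce $\tilde{U}_1$ from the $\operatorname{Ad}(K)$-invariance of $\mathfrak{p}$, and then verify (i) and (ii) by exploiting the two natural lifts of $L_k : N \to N$ through $\pi$, namely the left translation $L_k$ on $G$ and the inner automorphism $\operatorname{inn}_k(g) := kgk^{-1}$; both project to $L_k$ on $N$ because $k \in K$ acts trivially from the right on cosets, i.e.\ $\pi \circ R_{k^{-1}} = \pi$.

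For the existence of $\tilde{U}_1$, I note that $\operatorname{Ad}(k)$ is a linear automorphism of $\mathfrak{p}$, so $W_1 := W \cap \operatorname{Ad}(k^{-1})W$ is an open neighborhood of $0 \in \mathfrak{p}$. I would set $\tilde{U}_1 := \exp W_1$ and use the identity $k \exp(Y) k^{-1} = \exp(\operatorname{Ad}(k) Y)$, valid for $Y \in W_1 \subset \operatorname{Ad}(k^{-1})W$, to conclude $k \tilde{U}_1 k^{-1} \subset \exp W = \tilde{U}$. For (i), combining $\pi \circ L_k = L_k \circ \pi$ with $\pi \circ R_{k^{-1}} = \pi$ yields $\pi \circ \operatorname{inn}_k = L_k \circ \pi$, whence $L_k(U_1) = \pi(\operatorname{inn}_k(\tilde{U}_1)) = \pi(k \tilde{U}_1 k^{-1}) \subset \pi(\tilde{U}) = U$.

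For (ii), I would evaluate both sides at a point $aK \in U_1$ (with $a \in \tilde{U}_1$) and reduce each expression to the common value $d\pi|_{ka}(dL_{ka}|_e X)$. The left-hand side is immediate from $L_k \circ \pi = \pi \circ L_k$ together with the formula $X^\#_{aK} = d\pi|_a(dL_a|_e X)$ coming from \eqref{leftinv}. For the right-hand side, setting $b := kak^{-1} \in \tilde{U}$ (so that $bK = L_k(aK)$), the definition gives $(\operatorname{Ad}(k) X)^\#_{bK} = d\pi|_b\bigl(dL_b|_e(\operatorname{Ad}(k) X)\bigr)$; then I would decompose $\operatorname{Ad}(k) = dL_k|_{k^{-1}} \circ dR_{k^{-1}}|_e$ (coming from $\operatorname{inn}_k = L_k \circ R_{k^{-1}}$), use $L_b \circ L_k = L_{bk} = L_{ka}$, and invoke the commutation $L_{ka} \circ R_{k^{-1}} = R_{k^{-1}} \circ L_{ka}$ together with $\pi \circ R_{k^{-1}} = \pi$ to collapse everything to $d\pi|_{ka}(dL_{ka}|_e X)$.

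The main obstacle I expect is purely notational: one must carefully track the base points at which each differential acts, and recognize that the two natural lifts $L_k$ and $\operatorname{inn}_k$ of $L_k : N \to N$ differ by the right translation $R_{k^{-1}}$, which is invisible to $\pi$. Once this viewpoint is fixed, both parts reduce to short chain-rule manipulations using only the equivariances $\pi \circ L_k = L_k \circ \pi$ and $\pi \circ R_{k^{-1}} = \pi$, the exponential identity, and the group law $L_a \circ L_b = L_{ab}$.
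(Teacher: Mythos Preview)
Your proposal is correct and follows the paper's approach: the choice $W_1 = W \cap \operatorname{Ad}(k)^{-1}W$ and $\tilde U_1 = \exp W_1$ is exactly what the paper does, and part (i) is handled the same way. For part (ii) the paper merely hints at using \eqref{leftinv}; the intended argument stays on $N$ (use $dL_k(X^\#_{aK}) = dL_{ka}(X^\#_{eK})$, then $(\operatorname{Ad}(k)X)^\#_{bK} = dL_b\,dL_k(X^\#_{eK}) = dL_{bk}(X^\#_{eK})$ since the isotropy action at $eK$ is $\operatorname{Ad}(k)$, and $bk = ka$), whereas you lift to $G$ and cancel $R_{k^{-1}}$ via $\pi \circ R_{k^{-1}} = \pi$ --- these are the same computation viewed from two sides.
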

In fact, since $\operatorname{Ad}(k)^{-1}\mathfrak{p} \subset \mathfrak{p}$ we can take an open neighborhood $W_1$ of $0 \in \mathfrak{p}$ satisfying $0 \in W_1 \subset W \cap \operatorname{Ad}(k)^{-1}W$. Then $\tilde{U}_1 := \exp W_1$ is the desired neighborhood. Then (i) is obvious and (ii) can be proven by  use of \eqref{leftinv}.

For a vector bundle $E$ over a reductive homogeneous space $N = G/K$ we write $\Gamma(E)$ for the set of all smooth sections of $E$. $\Gamma(TN)$ is also written as $\mathfrak{X}(N)$. An affine connection $\nabla: \Gamma(TN) \rightarrow \Gamma(TN \otimes T^*N)$ on $N$ is called \textit{$G$-invariant} if 
\begin{equation*}
\nabla_{dL_a (v)} dL_a (Z) = dL_a (\nabla_{v} Z)
\end{equation*}
for $Z \in \Gamma(TN)$, $v\in TN$ and $a \in G$. A bilinear map $\alpha : \mathfrak{p} \times \mathfrak{p} \rightarrow \mathfrak{p}$ is called \textit{$\operatorname{Ad}(K)$-invariant} if
\begin{equation*}
\alpha(\operatorname{Ad}(k)X, \operatorname{Ad}(k)Y) = \operatorname{Ad}(k)\alpha(X,Y)
\end{equation*}
for $X, Y \in \mathfrak{p}$ and $k \in K$. Such an $\alpha$ is also called a \textit{connection function}.

\begin{proposition}[Nomizu \cite{Nom54}]\normalsize
Let $N = G/K$ be a reductive homogeneous space with decomposition $\mathfrak{g} = \mathfrak{k} \oplus \mathfrak{p}$. Then there exists a one-to-one correspondence between the set of all $G$-invariant affine connections on $N$ and the set of all $\operatorname{Ad}(K)$-invariant bilinear maps $\mathfrak{p} \times \mathfrak{p} \rightarrow \mathfrak{p}$. The correspondence is given by 
\begin{equation}\label{relation}
\nabla_{X^\#_{eK} } Y^\#= \alpha(X, Y).
\end{equation}
\end{proposition}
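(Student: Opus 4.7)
My plan is to construct the bijection directly by giving each direction and checking they are mutually inverse, with the main effort going into well-definedness of the inverse construction. Given a $G$-invariant affine connection $\nabla$, set $\alpha(X,Y)$ equal to the right-hand side of \eqref{relation} and identify $T_{eK}N$ with $\mathfrak{p}$ via $Z \mapsto Z^\#_{eK}$. Bilinearity is inherited from $\nabla$. For $\operatorname{Ad}(K)$-invariance, apply $G$-invariance with $a = k \in K$: since $L_k$ fixes $eK$, Lemma \ref{lem2}(ii) supplies $dL_k(X^\#_{eK}) = (\operatorname{Ad}(k)X)^\#_{eK}$ and $dL_k(Y^\#) = (\operatorname{Ad}(k)Y)^\#$ on a neighborhood of $eK$, so
\[
\alpha(\operatorname{Ad}(k)X,\operatorname{Ad}(k)Y) \;=\; \nabla_{dL_k X^\#_{eK}} dL_k Y^\# \;=\; dL_k\bigl(\nabla_{X^\#_{eK}} Y^\#\bigr) \;=\; dL_k\,\alpha(X,Y),
\]
and Lemma \ref{lem2}(ii) again shows that, under $T_{eK}N \cong \mathfrak{p}$, the map $dL_k$ on $T_{eK}N$ is precisely $\operatorname{Ad}(k)$ on $\mathfrak{p}$.

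Conversely, given an $\operatorname{Ad}(K)$-invariant $\alpha$, build $\nabla$ in two stages. On the chart $U$ of Lemma \ref{lem1}, a basis $X_1,\ldots,X_n$ of $\mathfrak{p}$ yields a frame $X_1^\#,\ldots,X_n^\#$ on $U$ (a frame by \eqref{leftinv}), so the prescription $\nabla^{\mathrm{loc}}_{X^\#} Y^\# := \alpha(X,Y)^\#$, extended $C^\infty(U)$-linearly in the direction and by the Leibniz rule in the section, uniquely defines a local connection $\nabla^{\mathrm{loc}}$ on $U$. To globalize, for each $p = aK \in N$ declare
\[
\nabla_{v} Z \;:=\; dL_a\bigl(\nabla^{\mathrm{loc}}_{dL_{a^{-1}}(v)}\, dL_{a^{-1}}(Z)\bigr),
\]
where $dL_{a^{-1}}(Z)$ denotes the push-forward of $Z$ to a neighborhood of $eK$.

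The main obstacle is well-definedness of this last formula under $a \mapsto ak$, $k \in K$; unwinding, this amounts to the identity $\nabla^{\mathrm{loc}}_{dL_k w}\,dL_k W = dL_k\,\nabla^{\mathrm{loc}}_w W$ at $eK$ for all $w \in T_{eK}N$ and local fields $W$. By the Leibniz rule together with the spanning property of $\{X^\#_{eK}\}_{X \in \mathfrak{p}}$, it suffices to treat $w = X^\#_{eK}$, $W = Y^\#$, in which case the identity becomes $\alpha(\operatorname{Ad}(k)X,\operatorname{Ad}(k)Y)^\#_{eK} = dL_k\,\alpha(X,Y)^\#_{eK}$; via Lemma \ref{lem2}(ii) this is exactly the $\operatorname{Ad}(K)$-invariance hypothesis on $\alpha$. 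Once well-definedness holds, $G$-invariance of the global $\nabla$ is immediate from the formula, the connection axioms transfer from $\nabla^{\mathrm{loc}}$, and the two assignments are mutually inverse by \eqref{relation}.
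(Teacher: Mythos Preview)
Your proposal is correct and follows essentially the same approach as the paper. The paper's argument is a terse sketch---it records the Leibniz expansion \eqref{localexp} to conclude that $\nabla$ is uniquely determined by $\alpha$, and then asserts that $G$-invariance of $\nabla$ is equivalent to $\operatorname{Ad}(K)$-invariance of $\alpha$ via Lemma~\ref{lem2}(ii)---while you spell out both directions in full, including the explicit globalization by left translations and the well-definedness check under $a\mapsto ak$; the key ingredients (the local frame $\{Y_i^\#\}$ from Lemma~\ref{lem1}, the Leibniz rule, and Lemma~\ref{lem2}(ii)) are the same.
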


In fact, by taking a basis $Y_1, \cdots, Y_n$ of $\mathfrak{p}$ we can express each $Z \in \mathfrak{X}(U)$ as $Z  = \sum_{i = 1}^n \varphi_i Y_i^\#$ where $\varphi_i \in C^\infty(U)$. Then the Leibniz rule shows
\begin{equation}\label{localexp}
\nabla_{X^\#_{eK}} Z
= 
\sum_{i =1}^n (\varphi_i(eK) \nabla_{X^\#_{eK}} Y_i^\# + (X^\#_{eK} \varphi_i) (Y_i^\#)_{eK}), 
\end{equation}
which implies that $\nabla$ is uniquely determined by $\alpha$. The $G$-invariance of $\nabla$ is equivalent to the $\operatorname{Ad}(K)$-invariance of $\alpha$, which can be seen by using Lemma \ref{lem2} (ii).

For $X \in \mathfrak{p}$ we write $X^*$ for the fundamental vector field on $N = G/K$, namely
\begin{equation*}
X^*_{aK} := \left. \frac{d}{dt} \right|_{t = 0} (\exp tX)aK.
\end{equation*}
Since $X^*_{eK} = X^\#_{eK}$ and $[X^*, Y_i^\#]_{eK} = 0$ it follows from \eqref{localexp} that: 
\begin{corollary}\label{canocon9}\normalsize
$
\nabla_{X^*_{eK}} Z
=
[X^*, Z]_{eK}
+
\alpha(X, Z_{eK})
$
for $X \in \mathfrak{p}$ and $Z \in \mathfrak{X}(N)$.
\end{corollary}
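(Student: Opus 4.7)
The plan is to combine the local expansion formula \eqref{localexp} with the two hints that precede the corollary, namely $X^*_{eK} = X^\#_{eK}$ and $[X^*, Y_i^\#]_{eK} = 0$, plus the defining relation \eqref{relation}.

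First I would take the basis $Y_1, \dots, Y_n$ of $\mathfrak{p}$ used in \eqref{localexp} and write $Z = \sum_{i=1}^n \varphi_i Y_i^\#$ on the neighborhood $U$ of $eK$. Since $X^*_{eK} = X^\#_{eK}$, the left-hand side $\nabla_{X^*_{eK}} Z$ equals $\nabla_{X^\#_{eK}} Z$, to which \eqref{localexp} applies directly. Using \eqref{relation} on each summand, the first group of terms becomes
\begin{equation*}
\sum_{i=1}^n \varphi_i(eK)\, \nabla_{X^\#_{eK}} Y_i^\# = \sum_{i=1}^n \varphi_i(eK)\, \alpha(X, Y_i) = \alpha\!\left(X,\, \sum_{i=1}^n \varphi_i(eK) Y_i\right) = \alpha(X, Z_{eK}),
\end{equation*}
where the last equality uses bilinearity of $\alpha$ together with $Z_{eK} = \sum_i \varphi_i(eK)(Y_i^\#)_{eK}$ under the identification $\mathfrak{p} \cong T_{eK} N$ via $X \mapsto X^\#_{eK}$.

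Next I would identify the remaining terms in \eqref{localexp} with $[X^*, Z]_{eK}$. Expanding the bracket with the Leibniz rule gives
\begin{equation*}
[X^*, Z] = \sum_{i=1}^n \bigl( X^*(\varphi_i)\, Y_i^\# + \varphi_i\, [X^*, Y_i^\#] \bigr).
\end{equation*}
Evaluating at $eK$ and using the hint $[X^*, Y_i^\#]_{eK} = 0$ kills the second summand, leaving
\begin{equation*}
[X^*, Z]_{eK} = \sum_{i=1}^n X^*_{eK}(\varphi_i)\, (Y_i^\#)_{eK} = \sum_{i=1}^n (X^\#_{eK} \varphi_i)\, (Y_i^\#)_{eK},
\end{equation*}
again using $X^*_{eK} = X^\#_{eK}$. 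This is exactly the second group of terms in \eqref{localexp}. Adding the two contributions yields the claimed formula.

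There is no real obstacle here: the corollary is essentially a re-packaging of \eqref{localexp} once one recognizes that the two ingredients supplied immediately before the statement ($X^*_{eK} = X^\#_{eK}$ and the vanishing of $[X^*, Y_i^\#]$ at $eK$) convert the coordinate derivatives $X^\#_{eK}(\varphi_i)$ into the geometric quantity $[X^*, Z]_{eK}$. The only care needed is the basis-independence of the right-hand side, which is automatic since both $[X^*, Z]_{eK}$ and $\alpha(X, Z_{eK})$ are defined without reference to the basis $Y_1, \dots, Y_n$.
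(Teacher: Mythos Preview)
Your proof is correct and is exactly the argument the paper has in mind: the paper's own justification is the single sentence ``Since $X^*_{eK} = X^\#_{eK}$ and $[X^*, Y_i^\#]_{eK} = 0$ it follows from \eqref{localexp} that'', and you have simply spelled this out in full detail.
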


Let $\nabla$ be an affine connection on a reductive homogeneous space $N = G/K$ with connection function $\alpha$. Then: 
\begin{enumerate}
\item $\nabla$ is called the \textit{canonical connection} (or the canonical connection of the second kind) if 
$
\alpha(X, Y) = 0
$
for any $X, Y \in \mathfrak{p}$. 

\item $\nabla$ is called the \textit{natural torsion-free connection} (or the canonical connection of the first kind) if 
$
\alpha(X, Y) = \frac{1}{2} [X, Y]_\mathfrak{p}
$
for $X, Y \in \mathfrak{p}$.
\end{enumerate}
In both cases, $\gamma(t) := \pi (\exp tX)$ is a geodesic through $eK$ where $X \in \mathfrak{p}$. In the case (i), the parallel translation of $v \in T_{eK} N$ along $\gamma$ is equal to $dL_{\gamma(t)}(v)$. In the case (ii), the torsion tensor vanishes. When $N$ is a symmetric space, (i) and (ii) are equivalent.

 Let $G$ be a connected Lie group. Write $\Delta G$ for the diagonal of $G \times G$. Then $\rho : (G \times G) / \Delta G \rightarrow G$, $(a,b) \mapsto ab^{-1}$ is an isomorphism between symmetric spaces. 
\begin{corollary}\label{canoconng} \normalsize
The canonical connection $\nabla^G$ of $G \cong (G \times G) / \Delta G$ 
is given by
$
(\nabla^G_X Z)_e = \frac{1}{2} [X + X^R, Z]_e
$
where $X \in \mathfrak{g}$ and $Z \in \mathfrak{X}(G)$. 
Here $X^R$ is the right invariant vector field satisfying $X^R_e = X_e$. 
In particular $\nabla^G_X Y = \frac{1}{2} [X, Y]$ for $X, Y \in \mathfrak{g}$. 
\end{corollary}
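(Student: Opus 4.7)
The plan is to apply Corollary \ref{canocon9} to the symmetric presentation $G \cong (G\times G)/\Delta G$. In this presentation $\tilde{\mathfrak{g}} := \mathfrak{g}\oplus\mathfrak{g}$, $\tilde{\mathfrak{k}} := \Delta\mathfrak{g} = \{(X,X) : X\in\mathfrak{g}\}$, and the standard $\operatorname{Ad}(\Delta G)$-invariant complement is $\tilde{\mathfrak{p}} := \{(X,-X) : X\in\mathfrak{g}\}$. Since $[\tilde{\mathfrak{p}},\tilde{\mathfrak{p}}]\subset\tilde{\mathfrak{k}}$, the space is symmetric, so by the remark preceding the statement the canonical connection $\nabla^{G}$ coincides with the natural torsion-free one and corresponds to the connection function $\alpha\equiv 0$.

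Next I will identify the fundamental vector fields on $G$ under $\rho$. The action of $G\times G$ on $G$ induced by $\rho$ is $(a,b)\cdot g = agb^{-1}$, so differentiating $t\mapsto \exp(tX)\,g\,\exp(-tY)$ at $t=0$ yields $(X,Y)^{*}_{g} = X^{R}_{g} - Y^{L}_{g}$, where $X^{R}$ and $Y^{L}$ denote the right and left invariant extensions of $X,Y\in\mathfrak{g}$. In particular $(X,-X)^{*} = X^{R}+X^{L}$, and hence $(X,-X)^{*}_{e} = 2X$. Therefore the element of $\tilde{\mathfrak{p}}$ whose fundamental vector field realizes a prescribed $X\in T_{e}G = \mathfrak{g}$ at $e$ is $\tfrac{1}{2}(X,-X)$, whose fundamental vector field on $G$ is $\tfrac{1}{2}(X^{R}+X^{L})$.

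Applying Corollary \ref{canocon9} with $\alpha = 0$ then gives
\[
(\nabla^{G}_{X} Z)_{e} = \bigl[\tfrac{1}{2}(X^{L}+X^{R}),\, Z\bigr]_{e} = \tfrac{1}{2}[X + X^{R},\, Z]_{e},
\]
after identifying $X\in\mathfrak{g}$ with its left invariant extension $X^{L}$, which is the asserted formula. For the final assertion I specialize $Z = Y$ with $Y\in\mathfrak{g}$ left invariant and invoke the standard fact that left and right invariant vector fields commute (their flows being right and left translations on $G$, which commute), so that $[X^{R},Y]\equiv 0$; hence $(\nabla^{G}_{X}Y)_{e} = \tfrac{1}{2}[X,Y]_{e}$. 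Both $\nabla^{G}_{X}Y$ and $\tfrac{1}{2}[X,Y]$ are left invariant vector fields (the former because $\nabla^{G}$ is $(G\times G)$-invariant and the left translation group is the subgroup $G\times\{e\}$), so this pointwise identity at $e$ propagates to an identity of vector fields on all of $G$.

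The only delicate point I anticipate is bookkeeping the factor of $2$ arising from $(X,-X)^{*}_{e} = 2X$ in the identification of $\tilde{\mathfrak{p}}$ with $T_{e}G$; once this is handled, the result is a direct substitution into Nomizu's formula.
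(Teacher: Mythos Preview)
Your proof is correct and follows essentially the same route as the paper: both apply Corollary~\ref{canocon9} (with $\alpha=0$) to the symmetric pair $(G\times G)/\Delta G$, compute the fundamental vector field $(X,-X)^{*}=X^{L}+X^{R}$ on $G$ via the induced action $(a,b)\cdot g=agb^{-1}$, and absorb the factor of $2$ coming from $(X,-X)^{*}_{e}=2X_{e}$. The only cosmetic difference is that you rescale the element of $\tilde{\mathfrak{p}}$ to $\tfrac{1}{2}(X,-X)$ before applying the corollary, whereas the paper applies it to $(X,-X)$ and divides the result by $2$; you are also more explicit than the paper in justifying the ``in particular'' clause via $[X^{R},Y^{L}]=0$ and left-invariance.
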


\begin{proof}\normalsize
The decomposition associated to $(G \times G)/ \Delta G$ is given by $\mathfrak{g} \oplus \mathfrak{g} = \Delta \mathfrak{g} \oplus (\Delta \mathfrak{g})^\perp$ where $(\Delta \mathfrak{g})^\perp := \{(X, - X) \mid X \in \mathfrak{g}\}$. With respect to the $(G \times G)$-action on $G$ defined by $(b,c) \cdot a := bac^{-1}$ we have
$
(X, -X)^*_a = \left. \frac{d}{dt} \right|_{t = 0} \exp t(X, -X) \cdot a = X_a + X^R_a
$.
In particular $(X, -X)^*_e = 2 X_e$. 
Since $\rho$ is equivariant with respect to the $(G \times G)$-actions, we have 
$
2 (\nabla_X Z)_e
=
(\nabla_{(X, -X)^*} Z)_e
\cong
(\nabla_{(X, -X)^*} Z)_{(e,e)}
=
[(X, -X)^*, Z]_{(e,e)}
\cong 
[X + X^R, Z]_e
$
by Corollary \ref{canocon9}. 
\end{proof}

From Corollary \ref{canoconng} we see that the canonical connection of $G \cong (G \times G)/ \Delta G$ is the same as the natural torsion-free connection of $G \cong G / \{e\}$.

\section{The parallel transport map}\label{ptm}

In this section we study the parallel transport map over a reductive homogeneous space and prove Theorem \ref{mainthm1}. We refer to \cite[Section 6]{M6} and its references for the parallel transport map over a Lie group.

Let $G$ be a connected Lie group with Lie algebra $\mathfrak{g}$. Write $\mathcal{G} := H^1([0,1], G)$ for the Hilbert Lie group of Sobolev $H^1$-maps from $[0,1]$ to $G$ and $V_\mathfrak{g} := L^2([0,1], \mathfrak{g})$ for the Hilbertable space of all $L^2$-maps from $[0,1]$ to $\mathfrak{g}$. The adjoint representation $\operatorname{Ad}: G \rightarrow GL(\mathfrak{g})$ induces the representation $\mathcal{G} \rightarrow GL(V_\mathfrak{g})$ which is still written as $\operatorname{Ad}$. 
Write $l_g$ and $r_g$ for the left and right translations by $g \in \mathcal{G}$ respectively. Let $g'$ denote the weak derivative of $g$. 
The affine action of $\mathcal{G}$ on $V_\mathfrak{g}$ is defined by 
\begin{equation}\label{gauge}
g * u := \operatorname{Ad}(g) u - dr_g^{-1} (g')
\end{equation}
where $g \in \mathcal{G}$ and $u \in V_\mathfrak{g}$.
This action is transitive.  For a submanifold $U$ of $G \times G$, 
\begin{equation*}
P(G, U) := \{g \in \mathcal{G} \mid (g(0), g(1)) \in U\}
\end{equation*}
is a submanifold of $\mathcal{G}$. If $U$ is a Lie subgroup of $G \times G$, then $P(G, U)$ is a Lie  subgroup of $\mathcal{G}$ and acts on $V_\mathfrak{g}$ by \eqref{gauge}.  If $U = \{e\} \times G$ or $G \times \{e\}$, then the $P(G, U)$-action on $V_\mathfrak{g}$ is simply transitive,  where $e$ denotes the identity element of $G$.

The \textit{parallel transport map $\Phi : V_\mathfrak{g} \rightarrow G$ over $G$} is a submersion defined by $\Phi(u) := g_u(1)$ where $g_u \in \mathcal{G}$ is the unique solution to the ordinary differential equation
\begin{equation*}
dl_g^{-1}(g') = u, \quad g(0) = e. 
\end{equation*}
By definition we have $\Phi(\hat{X}) = \exp X$ where $\hat{X}$ denotes the constant path with value $X \in \mathfrak{g}$. 
Consider the $(G \times G)$-action on $G$ defined by $(b, c) \cdot a := bac^{-1}$. Then
\begin{equation}\label{equiv1}
\Phi(g* u) = (g(0), g(1)) \cdot \Phi(u)
\end{equation}
where $g \in \mathcal{G}$ and $u \in V_\mathfrak{g}$. Moreover, for a Lie subgroup $U$ of $G \times G$, 
\begin{equation} \label{equiv2}
P(G, U) * u = \Phi^{-1}(U \cdot \Phi(u)).
\end{equation}
Furthermore $\Phi$ becomes a principal $P(G, \{e\} \times \{e\})$-bundle.

The differential of $\Phi : V_\mathfrak{g} \rightarrow G$ at $\hat{0} \in V_\mathfrak{g}$ is given by
\begin{equation}\label{diff6}
(d \Phi)_{\hat{0}} (X) = \int_0^1 X(t) dt
\end{equation}
where $X \in V_\mathfrak{g} \cong T_{\hat{0}} V_\mathfrak{g}$. Write $\hat{\mathfrak{g}}$ for the space of constant paths with values in $\mathfrak{g}$ and $F_u$ for the fiber of $\Phi$ through $u \in V_\mathfrak{g}$.  From \eqref{diff6} we have 
\begin{equation}\label{decomp97} 
T_{\hat{0}} V_\mathfrak{g} = \hat{\mathfrak{g}} \oplus T_{\hat{0}} (F_{\hat{0}})
, \qquad
X = \int_0^1X(t) dt \oplus \left(X - \int_0^1X(t) dt \right)
\end{equation}
From \eqref{equiv1} we know that $g*$ maps fibers of $\Phi$ to fibers of $\Phi$. Thus
\begin{equation*}
T_{g * \hat{0}} V_\mathfrak{g} = \operatorname{Ad}(g) \hat{\mathfrak{g}} \oplus T_{g * \hat{0}} (F_{g * \hat{0}})
\end{equation*}
for any $g \in \mathcal{G}$. Therefore the horizontal distribution $\mathcal{H}^\Phi$ of $\Phi$ is defined by 
\begin{equation*}
\mathcal{H}^{\Phi} (g * \hat{0}) := \operatorname{Ad}(g) \hat{\mathfrak{g}}.
\end{equation*}

We write $D$ for the flat connection on $V_\mathfrak{g}$ and $\nabla^G$ for the canonical connection on $G \cong (G \times G) /\Delta G$. The following fact was shown (\cite[Theorem 6.6]{M6}): 
\begin{theorem}[\cite{M6}] \label{previousemain}\normalsize
Let $G$ be a connected Lie group. Then the parallel transport map $\Phi : (V_\mathfrak{g}, D) \rightarrow (G, \nabla^G)$ is an affine submersion with horizontal distribution $\mathcal{H}^{\Phi}$. Moreover the fundamental tensor $\mathcal{A}^{\Phi}$ restricted to $\mathcal{H}^\Phi \times \mathcal{H}^\Phi$ is alternating.
\end{theorem}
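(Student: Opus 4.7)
The plan is to reduce both claims to a short, explicit computation at the origin $\hat{0} \in V_\mathfrak{g}$ using transitive equivariance, and then to carry out that computation. First, I observe that the affine $\mathcal{G}$-action on $V_\mathfrak{g}$ preserves the flat connection $D$ (it is affine) and preserves the horizontal distribution $\mathcal{H}^\Phi$ by its definition $\mathcal{H}^\Phi(g * \hat{0}) := \operatorname{Ad}(g)\hat{\mathfrak{g}}$, while the $(G\times G)$-action on $G$ preserves $\nabla^G$ since $\nabla^G$ is the canonical connection of $G \cong (G\times G)/\Delta G$. Combined with the equivariance \eqref{equiv1}, this makes both the horizontal projection of $D_{\bar{X}}\bar{Y}$ and the fundamental tensor $\mathcal{A}^\Phi$ equivariant under the pair of actions; since $\mathcal{G}$ acts transitively on $V_\mathfrak{g}$, verifying the two identities at $\hat{0}$ is enough.

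Next, I would derive a closed-form expression for the horizontal lift of a left-invariant vector field. Writing $u = g * \hat{0}$ with $g(0) = e$ (so that $g = g_u^{-1}$ and $\Phi(u) = g_u(1)$), the identity $\mathcal{H}^\Phi(u) = \operatorname{Ad}(g)\hat{\mathfrak{g}}$ together with the formula for $d\Phi_u$ obtained by differentiating \eqref{equiv1} shows that the horizontal lift of the left-invariant field $X \in \mathfrak{g}$ is given pointwise by $\bar{X}_u(t) = \operatorname{Ad}(g_u(t)^{-1} g_u(1)) X$. In particular $\bar{X}_{\hat{0}} = \hat{X}$, the constant path.

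The core calculation is then routine. For left-invariant $X, Y \in \mathfrak{g}$, I evaluate $D_{\bar{X}} \bar{Y}$ at $\hat{0}$ by differentiating $\bar{Y}$ along the straight line $s \mapsto s\hat{X}$; since $g_{s\hat{X}}(t) = \exp(stX)$, the formula collapses to $\bar{Y}_{s\hat{X}}(t) = \operatorname{Ad}(\exp(s(1-t)X)) Y$, whose $s$-derivative at $0$ is the path $t \mapsto (1-t)[X, Y]$. Decomposing via \eqref{decomp97}, the horizontal component is the constant path $\widehat{\tfrac{1}{2}[X,Y]}$ and the vertical component is $t \mapsto (\tfrac{1}{2} - t)[X, Y]$.

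By Corollary \ref{canoconng}, $\nabla^G_X Y|_e = \tfrac{1}{2}[X,Y]$, so the horizontal component exactly matches the horizontal lift of $\nabla^G_X Y$ at $\hat{0}$, establishing the affine submersion identity. The vertical component $(\tfrac{1}{2}-t)[X,Y]$ is manifestly antisymmetric in $(X,Y)$, so $\mathcal{A}^\Phi_{\bar{X}}\bar{Y} + \mathcal{A}^\Phi_{\bar{Y}}\bar{X} = 0$ at $\hat{0}$, and then everywhere by equivariance. The main obstacle I anticipate is the bookkeeping of sign conventions linking $g_u$, the gauge action $g * u$, and the equivariance \eqref{equiv1} so that the horizontal-lift formula comes out correctly; once that is pinned down, the derivative computation and its decomposition are elementary, and no root-space or curvature machinery is required.
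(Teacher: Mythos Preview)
The paper does not actually prove this theorem; it is quoted from the author's earlier paper \cite{M6} (Theorem 6.6 there) and used as input for the proof of Theorem \ref{mainthm1}. So there is no ``paper's own proof'' to compare against here.

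That said, your argument is correct and self-contained. The reduction to $\hat{0}$ by equivariance is exactly the mechanism the present paper uses for Theorem \ref{mainthm1}, and your explicit horizontal-lift formula $\bar{X}_u(t)=\operatorname{Ad}(g_u(t)^{-1}g_u(1))X$ checks out: with $g=g_u^{-1}\in P(G,\{e\}\times G)$ one has $g*\hat{0}=u$, so $\mathcal{H}^\Phi(u)=\operatorname{Ad}(g_u^{-1})\hat{\mathfrak{g}}$, and differentiating \eqref{equiv1} gives $d\Phi_u(\operatorname{Ad}(g_u^{-1})\hat{Z})=dr_{g_u(1)}(Z)$; matching this to the left-invariant $X$ at $g_u(1)$ forces $Z=\operatorname{Ad}(g_u(1))X$. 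The derivative along $s\mapsto s\hat{X}$ then yields $(1-t)[X,Y]$, whose horizontal part $\tfrac{1}{2}[X,Y]$ agrees with $\nabla^G_XY$ via Corollary \ref{canoconng}, and whose vertical part $(\tfrac{1}{2}-t)[X,Y]$ is visibly alternating.

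One small point worth making explicit in a write-up: you verify the affine-submersion identity only for left-invariant $Y$, whereas the condition must hold for arbitrary $Y\in\mathfrak{X}(G)$. This follows by writing $Y=\sum_i f_iY_i$ with $Y_i$ left-invariant and observing that both sides obey the same Leibniz rule in $Y$ (using $\bar{X}(f_i\circ\Phi)=(Xf_i)\circ\Phi$); the paper handles the analogous step in the proof of Lemma \ref{keylem} the same way.
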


Let $N = G/K$ be a homogeneous space. Then the \textit{parallel transport map $\Phi_{N}$ over $N$} is defined as the composition
\begin{equation*}
\Phi_{N} : = \pi \circ \Phi : V_\mathfrak{g} \rightarrow G  \rightarrow N.
\end{equation*}
Note that if $K = \{e\}$ then $\Phi_{N} = \Phi$. 
Note also that $\Phi_{(G \times G)/ \Delta G}$ is naturally identified with $\Phi$ (cf.\ \cite{M5}). By definition we have $\Phi_{N}(\hat{X}) = (\exp X)K$ for $X \in \mathfrak{g}$. Consider the $G$-action on $N$ defined by $b \cdot aK := (ba)K$. By \eqref{equiv1} we have
\begin{equation}\label{equiv3}
\Phi_{N} (g* u) = g(0)\cdot \Phi_{N}(u)
\end{equation}
where $g \in P(G, G \times K)$ and $u \in V_\mathfrak{g}$. Moreover, by \eqref{equiv2} we have 
\begin{equation}\label{equiv4}
P(G, H \times K) * u = \Phi_{N}^{-1}(H \cdot \Phi_{N}(u))
\end{equation}
where $H$ is a Lie subgroup of $G$. Furthermore the following fact holds. Here we do not suppose that $N$ is reductive, unlike in \cite[Proposition 7.3 (ii)]{M6}.

\begin{proposition}\label{principalbdl5}\normalsize
$\Phi_{N}$ becomes a principal $P(G, \{e\} \times K)$-bundle.
\end{proposition}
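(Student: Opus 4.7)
I would verify that $\Phi_N$ satisfies the three standard requirements for a principal bundle with structure group $H := P(G, \{e\} \times K)$: (i) $H$ acts smoothly and freely on $V_\mathfrak{g}$ via \eqref{gauge}; (ii) the $H$-orbits coincide with the fibres of $\Phi_N$; (iii) $\Phi_N$ admits smooth local trivialisations.

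Freeness in (i) is immediate from the fact recalled right after \eqref{gauge} that the larger group $P(G, \{e\} \times G)$ acts simply transitively on $V_\mathfrak{g}$. For (ii), I would apply \eqref{equiv4} with the Lie subgroup $\{e\}$ in place of $H$:
\[
P(G, \{e\} \times K) * u \;=\; \Phi_N^{-1}\bigl(\{e\} \cdot \Phi_N(u)\bigr) \;=\; \Phi_N^{-1}(\Phi_N(u)),
\]
so the $H$-orbit through $u$ is exactly the $\Phi_N$-fibre through $u$.

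For (iii), I would construct a smooth local section $s$ of $\Phi_N$ by composition: pick any smooth local section $\sigma : U \to G$ of the principal $K$-bundle $\pi$, and any smooth local section $\tau$ of $\Phi$ along $\sigma(U)$, which exists because $\Phi$ is already a principal $P(G, \{e\} \times \{e\})$-bundle, as recalled just before Theorem \ref{previousemain}. Setting $s := \tau \circ \sigma$, the map
\[
\Psi : U \times H \longrightarrow \Phi_N^{-1}(U), \qquad (x, g) \longmapsto g * s(x)
\]
is smooth and, by (i) and (ii), a bijection.

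The main obstacle will be checking that $\Psi^{-1}$ is smooth. I would handle this by writing it down explicitly using the simply transitive $P(G, \{e\} \times G)$-action: for each $v \in V_\mathfrak{g}$ let $g_v$ be the unique element of $P(G, \{e\} \times G)$ with $g_v * \hat{0} = v$, so $v \mapsto g_v$ is smooth, being inverse to the orbit diffeomorphism $g \mapsto g * \hat{0}$ (the ODE $g' = -v\cdot g$ with $g(0) = e$ depends smoothly on $v$). Given $u \in \Phi_N^{-1}(U)$, set $x := \Phi_N(u)$ and $g := g_u \, g_{s(x)}^{-1}$; then $g * s(x) = u$, and \eqref{equiv1} combined with $g(0) = e$ gives $g(1) = \Phi(u)^{-1} \Phi(s(x))$, which lies in $K$ because $\pi \circ \Phi$ sends both $u$ and $s(x)$ to $x$. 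Hence $g \in H$ and $\Psi^{-1}(u) = (x, g)$ depends smoothly on $u$, completing the verification.
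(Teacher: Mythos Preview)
Your proof is correct and takes essentially the same route as the paper: both exploit the diffeomorphism $V_\mathfrak{g}\cong P(G,\{e\}\times G)$ coming from the simply transitive action, and build the local trivialisation from a local section of $\pi:G\to N$. The only difference is packaging---the paper first transports the problem to the path group (where the gauge action becomes right multiplication $h\cdot g=gh^{-1}$) and writes down the trivialisation explicitly as $g\mapsto\bigl(g(1)K,\ t\mapsto g(t)^{-1}\exp tX_g\bigr)$, whereas you stay on $V_\mathfrak{g}$ and assemble the section abstractly by composing arbitrary local sections of $\pi$ and of $\Phi$.
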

\begin{proof}\normalsize
The map $P(G, \{e\} \times G) \rightarrow V_\mathfrak{g}$, $g \mapsto dl_g^{-1}(g')$ is diffeomorphism \cite[Lemma 6.2]{M6}. Under this identification we have $\Phi_{N}(g) = g(1)K$. Moreover the $P(G, \{e\} \times G)$-action on $V_\mathfrak{g}$ is identified with the $P(G, \{e\} \times G)$-action on itself by $h \cdot g := gh^{-1}$. Take a subspace $\mathfrak{p}$ of $\mathfrak{g}$ satisfying $\mathfrak{g} = \mathfrak{k} \oplus \mathfrak{p}$. ($\mathfrak{p}$ need not be $\operatorname{Ad}(K)$-invariant.) By the inverse function theorem there exist open neighborhoods $W$ of $0 \in \mathfrak{p}$ and $U$ of $eK \in N$ such that the map $W \rightarrow U$, $X \mapsto (\exp X)K$ is a diffeomorphism. For each $g \in P(G, \{e\} \times \pi^{-1}(U))$ we define $X_g \in W$ by $g(1) K = (\exp X_g)K$. Then
\begin{equation*}
P(G, \{e\} \times \pi^{-1}(U)) \rightarrow U \times P(G, \{e\} \times K)
, \quad
g \mapsto (g(1)K, g(t)^{-1} \exp tX_g)
\end{equation*}
is an equivariant diffeomorphism. This proves the proposition.
\end{proof}

Let $N = G/K$ be a reductive homogeneous space with decomposition $\mathfrak{g} = \mathfrak{k} \oplus \mathfrak{p}$. By \eqref{diff6} the differential of $\Phi_{N}$ at $\hat{0} \in V_\mathfrak{g}$ is given by
\begin{equation}\label{diff7}
(d \Phi_{N})_{\hat{0}} (X) = \int_0^1 X(t)_\mathfrak{p} dt
\end{equation}
where $X \in V_\mathfrak{g} \cong T_{\hat{0}} V_\mathfrak{g}$. Write $\hat{\mathfrak{p}}$ for the space of constant paths with values in $\mathfrak{p}$ and $\mathcal{F}_{u}$ for the fiber of $\Phi_{N}$ through $u \in V_\mathfrak{g}$.  From \eqref{diff7} we have
\begin{equation*}
T_{\hat{0}} V_\mathfrak{g} = \hat{\mathfrak{p}} \oplus T_{\hat{0}} (\mathcal{F}_{\hat{0}}). 
\end{equation*}
From \eqref{equiv3} we know that $g * $ maps fibers of $\Phi_{N}$ to fibers of $\Phi_{N}$. Thus
\begin{equation*}
T_{g * \hat{0}} V_\mathfrak{g} = \operatorname{Ad}(g) \hat{\mathfrak{p}} \oplus T_{g * \hat{0}} (\mathcal{F}_{g * \hat{0}})
\end{equation*}
where $g \in P(G, G \times K)$. Since $\mathfrak{p}$ is invariant under $\operatorname{Ad}(K)$ the horizontal distribution $\mathcal{H}^{\Phi_{N}}$ of $\Phi_{N}$ is well-defined by 
\begin{equation}\label{hdist98}
\mathcal{H}^{\Phi_{N}} (g * \hat{0}) := \operatorname{Ad}(g) \hat{\mathfrak{p}}
\end{equation}
where $g \in P(G, G \times K)$. 

To prove Theorem \ref{mainthm1} we show the following lemma, which generalizes Proposition 7.5 of \cite{M6}. Here $l_a$ denotes the left translation by $a \in G$.
\begin{lemma}\label{keylem}\normalsize
Let $N = G/K$ be a reductive homogeneous space with decomposition $\mathfrak{g} = \mathfrak{k} \oplus \mathfrak{p}$ and with natural torsion-free connection $\nabla^{N}$. Write $\nabla^G$ for the canonical connection of $G \cong (G \times G)/ \Delta G$. Then $\pi : (G, \nabla^G) \rightarrow (N, \nabla^{N})$ is an affine submersion with horizontal distribution $\mathcal{H}^\pi(a) := dl_a(\mathfrak{p})$. Moreover the fundamental tensor $\mathcal{A}^\pi$ restricted to $\mathcal{H}^\pi \times \mathcal{H}^\pi$ is alternating. 
\end{lemma}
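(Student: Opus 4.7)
The plan is to use the $G$-invariance of all structures involved to reduce to a computation at $e \in G$, then compare the two sides of the affine-submersion identity using Corollaries \ref{canocon9} and \ref{canoconng}. First I would record the elementary facts: $\mathcal{H}^\pi(a) = dl_a(\mathfrak{p})$ is a smooth left-invariant distribution on $G$ (smoothness via the left-trivialization $TG \cong G \times \mathfrak{g}$), complementary to $\mathcal{V}^\pi(a) = \ker d\pi_a = dl_a(\mathfrak{k})$ thanks to $\mathfrak{g} = \mathfrak{k} \oplus \mathfrak{p}$. Since $\nabla^G$ is invariant under the $(G \times G)$-action of Corollary \ref{canoconng} (in particular under left translations $l_b$), $\nabla^N$ is $G$-invariant, $\mathcal{H}^\pi$ is left-invariant, and $\pi \circ l_b = L_b \circ \pi$, both the affine-submersion identity at each $a \in G$ and the alternating property of $\mathcal{A}^\pi|_{\mathcal{H}^\pi \times \mathcal{H}^\pi}$ pull back under $l_{a^{-1}}$ to the same statements at $e$; so it suffices to verify them at $e$.

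For the computation at $e$, fix $X, Y \in \mathfrak{p}$ and vector fields $\bar X, \bar Y$ on a neighborhood of $eK$ with $\bar X_{eK} = X$ and $\bar Y_{eK} = Y$; let $\tilde X, \tilde Y$ denote their horizontal lifts, so $\tilde Y_b = dl_b \circ dL_{b^{-1}}(\bar Y_{\pi(b)})$. Along the curve $t \mapsto \exp(tX)$ in the local section $\tilde U = \exp W$ of Lemma \ref{lem1}, this specializes to $\tilde Y_{\exp(tX)} = dl_{\exp(tX)}(\xi(t))$ where $\xi(t) := dL_{\exp(-tX)}(\bar Y_{\exp(tX)K}) \in \mathfrak{p}$ satisfies $\xi(0) = Y$ and $\xi'(0) = [X^*, \bar Y]_{eK}$ (the last equality recognizes $\xi$ as the pull-back of $\bar Y$ under the flow $\phi^{X^*}_t(aK) = \exp(tX) aK$). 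Using Corollary \ref{canoconng} together with the identity $dr_{h^{-1}} \circ dl_h|_{T_e G} = \operatorname{Ad}(h)$, a short flow computation gives $[X^L, \tilde Y]_e = [X, Y] + \xi'(0)$ (the $\operatorname{Ad}$-twist contributes the $[X, Y]$) and $[X^R, \tilde Y]_e = \xi'(0)$, so
\begin{equation*}
(\nabla^G_{\tilde X} \tilde Y)_e = \tfrac{1}{2}[X^L + X^R, \tilde Y]_e = \tfrac{1}{2}[X, Y]_\mathfrak{k} + \bigl(\tfrac{1}{2}[X, Y]_\mathfrak{p} + [X^*, \bar Y]_{eK}\bigr),
\end{equation*}
already split into its vertical ($\mathfrak{k}$) and horizontal ($\mathfrak{p}$) parts.

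Finally, Corollary \ref{canocon9} applied to $\nabla^N$ with connection function $\alpha^N(X, Y) = \tfrac{1}{2}[X, Y]_\mathfrak{p}$ gives $(\nabla^N_{\bar X} \bar Y)_{eK} = [X^*, \bar Y]_{eK} + \tfrac{1}{2}[X, Y]_\mathfrak{p}$, which matches the horizontal component above under $T_{eK} N \cong \mathfrak{p}$; this yields the affine-submersion identity at $e$. The vertical component $\tfrac{1}{2}[X, Y]_\mathfrak{k}$ is manifestly alternating in $(X, Y)$, establishing the alternating property of $\mathcal{A}^\pi|_{\mathcal{H}^\pi \times \mathcal{H}^\pi}$ at $e$ and hence everywhere. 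The main technical obstacle is the separate flow evaluation of the two brackets above: the asymmetric appearance of the $\operatorname{Ad}$-twist (only in the $X^L$-bracket) is precisely what converts the vanishing connection function of $\nabla^G$ on $(\Delta \mathfrak{g})^\perp$ into the coefficient $\tfrac{1}{2}[X, Y]_\mathfrak{p}$ of the natural torsion-free connection on $N$.
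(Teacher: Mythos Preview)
Your proof is correct and shares the paper's overall strategy: use the $G$-invariance of $\nabla^G$, $\nabla^N$ and $\mathcal{H}^\pi$ (the commuting square with $l_a$ and $L_a$) to reduce to the identity at $e$, then verify both the affine-submersion relation and the formula $\mathcal{A}^\pi(X,Y)=\tfrac12[X,Y]_{\mathfrak{k}}$ there.

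The technical execution at $e$ is different. The paper expands an arbitrary $Z\in\mathfrak{X}(N)$ in the frame $\{Y_i^{\#}\}$ coming from the local section $\tilde U=\exp W$ of Lemma~\ref{lem1}, observes that on $\tilde U$ the horizontal lift $W_i$ of $Y_i^{\#}$ agrees with the left-invariant field $Y_i$, and then invokes Corollary~\ref{canoconng} to get $\nabla^G_{X_e}W_i=\tfrac12[X,Y_i]$; comparing with \eqref{localexp} finishes. You instead write the horizontal lift explicitly as $\tilde Y_b=dl_b\circ dL_{b^{-1}}(\bar Y_{\pi(b)})$ and compute $[X^L,\tilde Y]_e$ and $[X^R,\tilde Y]_e$ by flows, the $\operatorname{Ad}$-twist in the first bracket producing the extra $[X,Y]$ that splits into the two pieces $\tfrac12[X,Y]_{\mathfrak{p}}$ (matching Corollary~\ref{canocon9} for $\nabla^N$) and $\tfrac12[X,Y]_{\mathfrak{k}}=\mathcal{A}^\pi(X,Y)$. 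Your route avoids the basis and the pullback to $\tilde U$, trading them for the two flow evaluations; the paper's route, in turn, makes the step $\nabla^G_{X_e}W_i=\tfrac12[X,Y_i]$ a one-liner once the $\tilde U$-framework from Section~\ref{affineconn} is in place. Both arrive at the same formula for $\mathcal{A}^\pi$, so the alternating claim follows identically.
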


\begin{proof}\normalsize
It is clear that the following diagram commutes: 
\begin{equation}\label{commute3}
\begin{CD}
(G, \nabla^G) @>l_a >> (G, \nabla^G)
\\
@V\pi  VV  @V\pi  VV
\\
(N, \nabla^{N}) @>L_{a}>> (N, \nabla^{N}). 
\end{CD}
\end{equation}
Note that $l_a$ and $L_a$ are affine transformations and $\mathcal{H}^\pi$ is invariant under $l_a$. Thus, to show that $\pi : (G, \nabla^G) \rightarrow (N, \nabla^{N})$ is an affine submersion with horizontal distribution $\mathcal{H}^\pi$, we have only to show 
\begin{equation}\label{wantshow2}
(\nabla^G_{X_e} \bar{Z})^{\mathcal{H}}
=
(\nabla^{N}_{X^*_{eK}} Z \overline{)}
\end{equation}
for $X \in \mathfrak{p}$ and $Z \in \mathfrak{X}(N)$. Here $\bar{Z}$ denotes the horizontal lift of $Z$ and the superscript $\mathcal{H}$ in the left term denotes the projection onto $\mathcal{H}^\pi(e) = \mathfrak{p}$.

Fix the local trivialization given in Lemma \ref{lem1}. Take a basis $\{Y_i\}_{i = 1}^n$ of $\mathfrak{p}$ and 
write $Z|_U = \sum_{i = 1}^n \varphi_i Y_i^\#$ where $\varphi_i \in C^\infty(U)$. By \eqref{localexp} we have
\begin{equation}
\nabla^{N}_{X^\#_{eK}} (Z|_{U})
= 
\sum_{i =1}^n \left(\frac{1}{2} \varphi_i(eK) [X, Y_i]_\mathfrak{p} + (X^\#_{eK} \varphi_i) (Y_i^\#)_{eK} \right).
\end{equation}
Write $W_i$ for the horizontal lift of $Y_i^\#$. 
Note that $W_i$ and $Y_i$ are equal on $\tilde{U}$, but not necessarily equal on the whole $\pi^{-1}(U)$. 
We have $\bar{Z}|_{\pi^{-1}(U)} = \sum_{i = 1}^n (\varphi_i \circ \pi) W_i$.
By the Leibniz rule we have
\begin{equation*}
\nabla^G_{X_e} (\bar{Z}|_{\pi^{-1}(U)})
=
\sum_{i =1}^n \left(\varphi_i(eK) \nabla^G_{X_e} W_i + (X^{\#}_{eK} \varphi_i)(Y_i)_e \right).
\end{equation*}
Let $\iota : \tilde{U} \rightarrow G$ denote the inclusion map. Then
\begin{equation*}
\nabla^G_{X_e} W_i
=
\nabla^{\iota^*TG}_{X_e}  (\iota^* W_i)
=
\nabla^{\iota^*TG}_{X_e}  (\iota^* Y_i)
=
\nabla^{G}_{X_e} Y_i
=
\frac{1}{2}[X, Y_i]_e
\end{equation*}
by Corollary \ref{canoconng}. Here $\iota^*$ denotes the pullback. Thus \eqref{wantshow2} follows.

By Corollary \ref{canoconng} the fundamental tensor $\mathcal{A}^\pi$ is given by
$
\mathcal{A}^\pi(X, Y) = \frac{1}{2}[X, Y]_\mathfrak{k}
$
where $X, Y \in \mathfrak{p}$. Thus $\mathcal{A}^\pi$ restricted to $\mathcal{H}^\pi \times \mathcal{H}^\pi$ is alternating. 
\end{proof}

We are now in a position to prove Theorem \ref{mainthm1}. 
\begin{proof}[Proof of Theorem \textup{\ref{mainthm1}}]\normalsize
From \eqref{equiv3} the diagram
\begin{equation*}\label{commute90}
\begin{CD}
(V_\mathfrak{g}, D) @>g* >> (V_\mathfrak{g}, D)
\\
@V\Phi_{N} VV  @V\Phi_{N} VV
\\
(N, \nabla^{N}) @>L_{g(0)}>> (N, \nabla^{N})
\end{CD}
\end{equation*}
commutes for any $g \in P(G, G \times \{e\})$. Note that $g*$ and $L_{g(0)}$ are affine transformations and $\mathcal{H}^{\Phi_{N}}$ is invariant under $g*$. Thus, to show that $\Phi_{N}$ is an affine submersion with horizontal distribution $\mathcal{H}^{\Phi_{N}}$ we have only to show
\begin{equation}\label{wantoshow}
(D_{\hat{X}} \hat{Z})^{\mathcal{H}}
=
(\nabla^{N}_{X^*_{eK}} Z\widehat{)}
\end{equation}
for $X \in \mathfrak{p}$ and $Z \in \mathfrak{X}(N)$. Here $\hat{Z}$ denotes the horizontal lift of $Z$ and the superscript $\mathcal{H}$ in the left term denotes the projection onto $\mathcal{H}^{\Phi_{N}}(\hat{0}) = \hat{\mathfrak{p}}$.

From \eqref{equiv1} we have the commutative diagram for any $g \in P(G, G \times \{e\})$: 
\begin{equation*}\label{commute91}
\begin{CD}
(V_\mathfrak{g}, D) @>g* >> (V_\mathfrak{g}, D)
\\
@V\Phi VV  @V\Phi VV
\\
(G, \nabla^G) @>l_{g(0)}>> (G, \nabla^G).
\end{CD}
\end{equation*}
The horizontal distributions $\mathcal{H}^\Phi$ and $\mathcal{H}^\pi$ are invariant under $g*$ and $l_{g(0)}$ respectively. This implies 
$\hat{Z} = (\bar{Z} \widetilde{)}$
where the tilde denotes the horizontal lift with respect to $\Phi$. Thus, by Theorem \ref{previousemain} and Lemma \ref{keylem} we have 
\begin{equation*}
(D_{\hat{X}} \hat{Z})^{\mathcal{H}}
=
(\nabla^G_{X_e} \bar{Z} \widetilde{)_\mathfrak{p}}
=
(\nabla^{N}_{X^*_{eK}} Z\widehat{)}
\end{equation*}
which proves \eqref{wantoshow}. Since $T_{\hat{0}} (\mathcal{F}_{\hat{0}}) = T_{\hat{0}} (F_{\hat{0}}) \oplus \hat{\mathfrak{k}}$ and $\Phi$ is an affine submersion with horizontal distribution $\mathcal{H}^\Phi$ we have 
\begin{equation*}
\mathcal{A}^{\Phi_{N}}(\hat{X}, \hat{Y})
=
\mathcal{A}^\Phi(\hat{X}, \hat{Y}) \oplus \mathcal{A}^\pi(X, Y)
\end{equation*}
for $X, Y \in \mathfrak{p}$. Hence Theorem \ref{previousemain} and Lemma \ref{keylem} imply that $\mathcal{A}^{\Phi_{N}}$ restricted to $\mathcal{H}^{\Phi_{N}} \times \mathcal{H}^{\Phi_{N}}$ is alternating. This completes the proof.
\end{proof}

\section{Compactness of the shape operator}\label{compactshape}

In this section, we study the shape operators of fibers of the parallel transport map and prove Theorem \ref{mainthm2}. 

Let $G$ be a connected Lie group with Lie algebra $\mathfrak{g}$ and $\Phi : V_\mathfrak{g} \rightarrow G$ the parallel transport map. The Lie subgroup
$
P(G ,\{e\} \times \{e\})
$
of $\mathcal{G} = H^1([0,1], G)$ has Lie algebra
\begin{equation*}
\operatorname{Lie} P(G, \{e\} \times \{e\}) := \{Z \in H^1([0,1], \mathfrak{g}) \mid Z(0) = Z(1) = 0\}
\end{equation*}
and acts on the fiber $F = F_{\hat{0}} = \Phi^{-1}(e)$ simply transitively. The exponential map $\exp^\mathcal{G}$ of $\mathcal{G}$ is defined by 
$
(\exp^\mathcal{G} Z)(t) := \exp^G Z(t)
$
for
$
Z \in H^1([0,1], \mathfrak{g})
$
where $\exp^G$ denotes the exponential map of $G$. 
Since
\begin{equation}\label{deriv6}
\left. \frac{d}{ds} \right|_{s = 0} (\exp^\mathcal{G} sZ) * \hat{0} = -Z' 
\end{equation}
we have
\begin{equation}\label{fibertang}
T_{\hat{0}} F = \{ Z'  \in V_\mathfrak{g} \mid Z \in \operatorname{Lie} P(G, \{e\} \times \{e\})\}.
\end{equation}

For $\xi \in \mathfrak{g}$ we write $\mathcal{E}(\hat{\xi})$ for the $P(G, \{e\} \times \{e\})$-equivariant vector field on $F$: 
\begin{equation*}
\mathcal{E} (\hat{\xi}) (g* \hat{0})
:=
\operatorname{Ad}(g) \hat{\xi}
, \qquad 
g \in P(G, \{e\} \times \{e\}).
\end{equation*}
Then we have
\begin{equation*}
- D_{Z'} \mathcal{E}(\hat{\xi})
=
\left. \frac{d}{ds} \right|_{s = 0}
\mathcal{E} (\hat{\xi}) ((\exp^\mathcal{G} sZ )* \hat{0})
=
[Z, \hat{\xi}].
\end{equation*}
From this and \eqref{decomp97} we get an explicit formula for the shape operator $A^F_{\hat{\xi}}$ of $F$ (\cite[Lemma 1]{M1}): 
\begin{equation}\label{shapeop9}
A_{\hat{\xi}}^F (Z')
=
 [Z, \hat{\xi}] - \int_0^1 \left[Z(t), \xi \right]  dt
\end{equation}
where $Z \in \operatorname{Lie} P(G, \{e\} \times \{e\})$.

We will also use the following inequality: 
\begin{equation}\label{ineq}
\left|
\int_0^t X(s) ds
\right|^2
\leq
t \int_0^t |X(s)|^2 ds
\leq
t \int_0^1 |X(s)|^2 ds
\end{equation}
where $X \in L^2([0,1], \mathbb{R}^d)$ and $t \in [0,1]$. This is almost trivial by the Schwartz inequality: By decomposing $X = \sum_i X_i e_i$ by the canonical basis $\{e_i\}_i$ of $\mathbb{R}^d$ we have 
\begin{align*}
\left|
\int_0^t X(s) ds
\right|^2
&=
\sum_i
\left(
\int_0^t  X_i(s) ds 
\right)^2
\leq 
\sum_i
t \int_0^t  X_i(s)^2 ds 
=
t \int_0^t   |X(s)|^2 ds.
\end{align*}

 We are now in a position to prove Theorem \ref{mainthm2}.

\begin{proof}[Proof of Theorem \textup{\ref{mainthm2}}]\normalsize
Since the assertion is local, we may assume that $M$ is embedded in $N$ and identify $\hat{M}$ with $\Phi_{N}^{-1}(M)$. Since $\Phi_{N}^{-1}(M) = \Phi^{-1}(\pi^{-1}(M))$ it suffices to prove the assertion when $K = \{e\}$. 
By left translation we may assume $e \in M$ and $\hat{0} \in \hat{M}$.
Let $A^{\hat{M}}_{\hat{\xi}}$ denote the shape operator of $\hat{M}$ where $\xi \in \mathcal{W}_e$. 
For $X \in T_{\hat{0}} \hat{M}$ we write $X^\mathcal{H}$ and $X^\mathcal{V}$ for the horizontal and vertical components respectively. Then
\begin{equation}\label{shapedecomp}
A^{\hat{M}}_{\hat{\xi}} (X)
=
A^{\hat{M}}_{\hat{\xi}} (X^\mathcal{H})^\mathcal{H} + A^{\hat{M}}_{\hat{\xi}} (X^\mathcal{H})^\mathcal{V} 
+ 
A^{\hat{M}}_{\hat{\xi}} (X^\mathcal{V})^\mathcal{H} + A^{\hat{M}}_{\hat{\xi}} (X^\mathcal{V})^\mathcal{V}.
\end{equation}
In the right side, the last term is identified with $A^F_{\hat{\xi}}(X^\mathcal{V})$ and the other terms span the finite dimensional subspace. Thus we have only to show that $A^F_{\hat{\xi}}$ is compact. 

Choose an inner product $\langle \cdot, \cdot \rangle$ on $\mathfrak{g}$. Write $\langle \cdot, \cdot \rangle_{L^2}$ for the $L^2$-inner product on $V_\mathfrak{g}$. Let $\{Z_n\}_n \subset \operatorname{Lie} P(G, \{e\} \times \{e\})$ be a sequence such that $\|Z'_n\|_{L^2}$ is bounded. Since $Z_n(0) = 0$ we have $Z_n(t) = \int_0^t Z_n'(s) ds$. By using \eqref{ineq} we have
\begin{equation*}
\|Z_n\|_{L^2}^2
=
\int_0^1 \left|\int_0^t Z'_n(s) ds \right|^2 dt
\leq 
\int_0^1 t  \,dt \int_0^1 |Z'_n(s)|^2 ds 
=
\frac{1}{2}
\|Z'_n\|_{L^2}^2.
\end{equation*}
Thus
$
\|Z_n\|_{H^1} 
$
is bounded. By the Sobolev embedding theorem there exists a subsequence $\{Z_{n(k)}\}_k$ which converges uniformly to some $Z_\infty \in C([0,1], \mathfrak{g})$. Clearly $Z_{n(k)}$ converges to $Z_\infty$ also in the $L^2$-sense. Thus \eqref{shapeop9} shows that $A_{\hat{\xi}}^F(Z'_{n(k)})$ converges to $ [Z_\infty, \hat{\xi}] - \int_0^1[Z_\infty(t), \xi] dt \in \operatorname{Ker} (d \Phi)_{\hat{0}}  =T_{\hat{0}} F $. This completes the proof. 
\end{proof}

\begin{remark}\label{remimpor}\normalsize
Let $\nabla^{N}$ be the natural torsion-free connection on $N = G/K$. By Theorem \ref{mainthm1} the parallel transport map $\Phi_{N}: (V_\mathfrak{g}, D)  \rightarrow (N, \nabla^{N})$ becomes an affine submersion with horizontal distribution. Thus, in this case, the first term in the right side of \eqref{shapedecomp} is identified with $A_\xi^M((d \Phi)_{\hat{0}}(X))$ where $A_\xi^M$ denotes the shape operator of the submanifold $M$ of $(N, \nabla^{N})$ with transversal bundle $\mathcal{W}$ (cf.\ \cite[(3.4)]{M6}). 
\end{remark}

Motivated by Theorem \ref{mainthm2} we make the following definition. Note that this terminology is different from that in the author's another paper \cite[Section 4]{M6}.

\begin{definition}\normalsize
Let $V$ be a Hilbertable space equipped with the flat connection $D$ and $M$ an immersed submanifold of $V$ with transversal bundle $\mathcal{W}$. Then $M$ is called \textit{Fredholm} if $M$ has finite codimension in $V$ and satisfies (B) (and thus (A)) in the introduction.
\end{definition}

Let $M$ be a Fredholm submanifold of a Hilbertable space $V$ with transversal bundle $\mathcal{W}$. For each $(p, \xi) \in \mathcal{W}$ the shape operator $A_\xi : T_p M \rightarrow T_pM$ is a compact operator, which is not self-adjoint in general. Thus we consider the complexification $A_\xi^\mathbb{C}$.
From the spectral theory of compact operators on Banach spaces (cf.\ \cite{Con90, Rob20}) we have: 
\begin{lemma}\normalsize
The following properties hold:
\begin{enumerate}
\item the nonzero spectrum of $A_\xi^{\mathbb{C}}$ is finite or countably infinite and consists of eigenvalues $\{\lambda_k\}_k$ of $A_\xi^{\mathbb{C}}$,
\item each nonzero eigenvalue $\lambda_k$ of $A_\xi^{\mathbb{C}}$ has finite multiplicity,
\item if $V$ has infinite dimension, then $0$ belongs to the spectrum of $A_\xi^{\mathbb{C}}$,
\item if $\{\lambda_k\}_k$ is countably infinite, then $\lim_{k \rightarrow \infty} \lambda_k = 0$.
\end{enumerate}
\end{lemma}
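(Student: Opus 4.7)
The plan is to reduce all four claims directly to the Riesz--Schauder spectral theorem for compact operators on complex Banach spaces, as cited in \cite{Con90, Rob20} just before the lemma. First I would check that we are in the hypotheses of that theorem. Since $M$ has finite codimension in the Hilbertable space $V$, the tangent space $T_pM$ is a closed subspace of $V$, hence itself a real Banach space (in fact Hilbertable). Its complexification $E := T_pM \otimes_\mathbb{R} \mathbb{C}$, equipped with the norm $\|x+iy\|^2 := \|x\|^2 + \|y\|^2$, is a complex Banach space, and $A_\xi^\mathbb{C}(x+iy) := A_\xi x + i A_\xi y$ defines a bounded $\mathbb{C}$-linear operator on $E$.

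Second, I would verify that $A_\xi^\mathbb{C}$ is compact. Given a bounded sequence $\{x_n + i y_n\}_n$ in $E$, the real sequences $\{x_n\}_n$ and $\{y_n\}_n$ are bounded in $T_pM$, so by the compactness of $A_\xi$ one can extract, in two steps, a subsequence along which both $A_\xi x_n$ and $A_\xi y_n$ converge in $T_pM$; this yields a convergent subsequence of $A_\xi^\mathbb{C}(x_n + i y_n)$.

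Third, I would invoke the Riesz--Schauder theorem. Properties (i), (ii), and (iv) are direct consequences: the nonzero spectrum of a compact operator on a complex Banach space consists only of eigenvalues of finite algebraic multiplicity, forming an at most countable set whose only possible accumulation point is $0$. For (iii), if $V$ is infinite-dimensional then so is $T_pM$ (by finite codimension) and hence so is $E$; were $0$ in the resolvent set, then $A_\xi^\mathbb{C}$ would be a compact linear bijection of $E$ with bounded inverse, forcing $\mathrm{id}_E = (A_\xi^\mathbb{C})^{-1} \circ A_\xi^\mathbb{C}$ to be compact, contradicting Riesz's lemma on the non-compactness of the closed unit ball of an infinite-dimensional Banach space. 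There is no substantive obstacle here: the lemma is essentially a translation of the classical spectral theorem for compact operators into the language of shape operators, with the only care needed being the routine verification that complexification preserves compactness and the standard Riesz-lemma argument for (iii).
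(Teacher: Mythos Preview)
Your proposal is correct and matches the paper's approach exactly: the paper does not give a proof at all but simply states the lemma as a direct consequence of ``the spectral theory of compact operators on Banach spaces (cf.\ \cite{Con90, Rob20})''. Your write-up merely fills in the routine details (compactness is preserved under complexification, Riesz--Schauder gives (i), (ii), (iv), and the standard Riesz-lemma argument gives (iii)) that the paper leaves implicit.
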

Note that the shape operator $A_\xi$ is not of trace class in general, even when $M$ is a (Riemannian) PF submanifold of  a Hilbert space. In the remaining two sections, we will study the trace of the shape operator (i.e.\ the mean curvature) of Fredholm submanifolds in Hilbertable spaces.

\section{Regularized mean curvatures I}\label{minimality}

In this section, we propose a definition of the mean curvatures of Fredholm submanifolds in Hilbertable spaces and discuss their properties. For the Riemannian case, we refer to King-Terng \cite{KT93}, Heintze-Liu-Olmos \cite{HLO06} and Koike \cite{Koi02}.

Let $H$ be a (real, separable) Hilbertable space and $A: H \rightarrow H$ a compact operator. Write $A^\mathbb{C} : H^\mathbb{C} \rightarrow H^\mathbb{C}$ for the complexification and $\Lambda$ for the set of all eigenvalues of $A^\mathbb{C}$. Observe that if $\lambda \in \Lambda$ then the complex conjugation $\bar{\lambda}$ belongs to $\Lambda$. Thus, to define the trace of $A$, it is reasonable to focus on the \textit{real} part of each eigenvalue. Write $\operatorname{Re} \lambda$ and $\operatorname{Im} \lambda$ for the real and imaginary parts of $\lambda$ respectively. Set $\Lambda_{>0} := \{\operatorname{Re} \lambda \mid \lambda \in \Lambda, \ \operatorname{Re} \lambda > 0\}$ and $\Lambda_{<0} := \{\operatorname{Re} \lambda \mid \lambda \in \Lambda, \ \operatorname{Re} \lambda < 0\}$. Write $\Lambda_{>0} = \{\mu_k\}_{k = 1}^\infty$ and $\Lambda_{<0} = \{\nu_k\}_{k = 1}^\infty$ (counted with multiplicities) so that
\begin{equation*}
\nu_1 \leq \nu_2 \leq \cdots < 0 < \cdots \leq \mu_2 \leq \mu_1
\end{equation*}
where we regard $\mu_{k} = 0$ (resp.\ $\nu_k = 0$) when the set $\Lambda_{>0}$ (resp.\ $\Lambda_{<0}$) is finite and has cardinality less than $k$.

\begin{definition}\normalsize
A compact operator $A : H \rightarrow H$ is called \textit{regularizable} if the series
\begin{equation*}\label{hlotrace}
\sum_{k = 1}^\infty (\mu_k + \nu_k)
\end{equation*}
converges. Then its value is called the \textit{regularized trace} of $A$ and written as $\operatorname{tr}_r A$. 
We say that $A$ is \textit{regularizable} if its regularized trace exists.
\end{definition}

\begin{remark}\normalsize
When $A$ is a compact self-adjoint operator on a (real, separable) Hilbert space the above definition coincides with the regularized trace considered by Heintze, Liu and Olmos \cite{HLO06}, except that we do not assume $\operatorname{tr} (A^2) < \infty$ here.
\end{remark}

\begin{remark}\normalsize
Let $H$ be a Hilbertable space and $A : H \rightarrow H$ be a compact operator. Write $\{\lambda_k\}_k$ for the distinct eigenvalues of $A^\mathbb{C}$ so that for each $k$ one of the following conditions hold:
(i) $|\lambda_k| > |\lambda_{k + 1}|$; 
(ii) $|\lambda_k| = |\lambda_{k + 1}|$ and $\operatorname{Re} \lambda_k  > \operatorname{Re} \lambda_{k + 1}$;
(iii) $|\lambda_k| = |\lambda_{k + 1}|$, $\operatorname{Re} \lambda_k  = \operatorname{Re} \lambda_{k + 1}$ and $\operatorname{Im}  \lambda_k = - \operatorname{Im} \lambda_{k+1} > 0$.
Write $m(\lambda_k)$ for the multiplicity of $\lambda_k$. 
Koike \cite{Koi02, Koi04} defines the trace of $A^\mathbb{C}$ as
$
\sum_{k= 1}^\infty m(\lambda_k) \lambda_k
$
when it converges. (More precisely, he assumes that $A$ is the shape operator of a Fredholm submanifold and $A^\mathbb{C}$ has the basis of eigenfunctions.) This definition is different from ours. For example, if $A$ has eigenvalues $\{- \frac{1}{k}\}_{k = 1}^\infty \cup \{0\} \cup \{\frac{1}{k}\}_{k = 1}^\infty$ with $m(\frac{1}{k}) = m(- \frac{1}{k}) = k$, then the trace is $0$ in our sense, but does not converge in his sense.
\end{remark}

Let $M$ be a Fredholm submanifold of a Hilbertable space $V$ with transversal bundle $\mathcal{W}$. Then $M$ is called \textit{regularizable} if for each $(p, \xi) \in \mathcal{W}$ the shape operator $A_\xi$ is regularizable. Then $\operatorname{tr}_r A_\xi$ is called the \textit{regularized mean curvature} of $M$ in the direction of $\xi$. If $M$ is regularizable and $\operatorname{tr}_r A_\xi = 0$ for any $(p, \xi) \in \mathcal{W}$, then $M$ is called \textit{minimal}. 

\begin{proposition}\label{minimalfiber}\normalsize
Let $N = G/K$ be a reductive homogeneous space with decomposition $\mathfrak{g} = \mathfrak{k} \oplus \mathfrak{p}$. Then each fiber of the parallel transport map $\Phi_{N} : V_\mathfrak{g} \rightarrow N$ is minimal. 
\end{proposition}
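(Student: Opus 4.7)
The plan is to exhibit a time-reversal involution of the fiber tangent space that anti-conjugates the shape operator, forcing its complex spectrum to be symmetric under $\lambda \mapsto -\lambda$ and thus making the regularized trace vanish automatically.

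First I would use the $P(G, G \times K)$-equivariance \eqref{equiv3} of $\Phi_{N}$ to reduce everything to the single fiber $\mathcal{F}_{\hat{0}} = \Phi_{N}^{-1}(eK)$ at the point $\hat{0}$ in transversal directions $\hat{\xi}$ with $\xi \in \mathfrak{p}$: every other pair (fiber, transversal vector) is obtained from this one by applying an affine transformation $g*$, and since $g*$ conjugates shape operators by $\operatorname{Ad}(g)$ the complex spectra—and hence the regularized traces—agree. Then I would derive the analogue of \eqref{shapeop9} for $\mathcal{F}_{\hat{0}}$: taking the $P(G, \{e\} \times K)$-equivariant extension $\mathcal{E}_K(\hat{\xi})(g * \hat{0}) := \operatorname{Ad}(g)\hat{\xi}$, which is a section of $\mathcal{H}^{\Phi_{N}}$ by \eqref{hdist98}, and using the decomposition $V_\mathfrak{g} = \hat{\mathfrak{p}} \oplus T_{\hat{0}}\mathcal{F}_{\hat{0}}$ from \eqref{diff7}, the computation behind \eqref{shapeop9} adapts directly to yield
\[
A^{\mathcal{F}}_{\hat{\xi}}(Z') \;=\; [Z, \hat{\xi}] \;-\; \int_0^1 [Z(t), \xi]_\mathfrak{p} \, dt
\]
for all $Z \in H^1([0,1], \mathfrak{g})$ with $Z(0) = 0$ and $Z(1) \in \mathfrak{k}$.

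The heart of the argument is the involution $R : V_\mathfrak{g} \to V_\mathfrak{g}$, $(RY)(t) := Y(1-t)$. It preserves $T_{\hat{0}}\mathcal{F}_{\hat{0}} = \{Y \in V_\mathfrak{g} : \int_0^1 Y(t)_\mathfrak{p} \, dt = 0\}$, and if $Y = Z'$ satisfies the boundary conditions above then $\tilde{Z}(t) := Z(1) - Z(1-t)$ satisfies the same conditions and has $\tilde{Z}' = RY$. Substituting $s \mapsto 1-s$ in the integral defining $A^{\mathcal{F}}_{\hat{\xi}}(RY)$ and using $[Z(1), \xi] \in [\mathfrak{k}, \mathfrak{p}] \subset \mathfrak{p}$, I would verify the pointwise identity
\[
A^{\mathcal{F}}_{\hat{\xi}}(RY)(t) \;=\; -\,A^{\mathcal{F}}_{\hat{\xi}}(Y)(1-t),
\]
equivalently $R\, A^{\mathcal{F}}_{\hat{\xi}}\, R^{-1} = -A^{\mathcal{F}}_{\hat{\xi}}$. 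I expect this algebraic verification to be the main obstacle: one must check that the extra term $[Z(1), \xi]$ contributed by $\tilde{Z}$ cancels exactly after projection onto $\mathfrak{p}$, and this is precisely where the reductivity assumption $[\mathfrak{k}, \mathfrak{p}] \subset \mathfrak{p}$ enters in an essential way.

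Once the anti-similarity is in hand, the complexified spectrum of $A^{\mathcal{F}}_{\hat{\xi}}$ is invariant under $\lambda \mapsto -\lambda$ with multiplicities preserved. In the notation of Section \ref{minimality} this forces $\mu_k = -\nu_k$ for every $k$, so the series $\sum_k (\mu_k + \nu_k)$ converges termwise to $0$. Therefore $A^{\mathcal{F}}_{\hat{\xi}}$ is regularizable with $\operatorname{tr}_r A^{\mathcal{F}}_{\hat{\xi}} = 0$, yielding the required minimality of each fiber of $\Phi_{N}$.
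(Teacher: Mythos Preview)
Your argument is correct. The anti-similarity $R\,A^{\mathcal{F}}_{\hat\xi}\,R^{-1}=-A^{\mathcal{F}}_{\hat\xi}$ checks out exactly as you describe: with $\tilde Z(t)=Z(1)-Z(1-t)$ the extra term $[Z(1),\xi]$ lies in $[\mathfrak{k},\mathfrak{p}]\subset\mathfrak{p}$ and cancels against its integral, leaving $A^{\mathcal{F}}_{\hat\xi}(RY)(t)=-A^{\mathcal{F}}_{\hat\xi}(Y)(1-t)$. The spectral symmetry $\lambda\leftrightarrow-\lambda$ then gives $\mu_k=-\nu_k$ and $\operatorname{tr}_r A^{\mathcal{F}}_{\hat\xi}=0$.

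The paper reaches the same spectral symmetry by a different route: it quotes \cite[Theorem~8.3]{M6} to say that each fiber is a \emph{weakly reflective} submanifold of $V_\mathfrak{g}$, hence austere, which by definition means the eigenvalue set (with multiplicities) is invariant under multiplication by $-1$. Your time-reversal $R$ is essentially the linearization of the weakly reflective symmetry (the map $u\mapsto -u(1-\,\cdot\,)$ fixes $\hat 0$, preserves the fiber, and sends $\hat\xi$ to $-\hat\xi$), so the two arguments share the same underlying involution. The difference is one of packaging: the paper invokes an external structural result, while you verify the anti-commutation directly from the explicit shape operator formula \eqref{shapeop11}. Your approach is more self-contained and makes the use of reductivity transparent at the exact step where $[Z(1),\xi]\in\mathfrak{p}$ is needed; the paper's approach places the fact in the broader context of weakly reflective and austere submanifolds.
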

\begin{proof}\normalsize
It is shown in \cite[Theorem 8.3]{M6} that each fiber $\mathcal{F}$ of $\Phi_N$ is a weakly reflective submanifold of $V_\mathfrak{g}$. Thus $\mathcal{F}$ is austere, that is, for each normal vector $\hat{\xi}$ the set of eigenvalues with multiplicities of the shape operator  $A^{\mathcal{F}}_{\hat{\xi}}$ is invariant under the multiplication by $(-1)$ \cite[Proposition 8.2]{M6}. Thus $\operatorname{tr}_r A^{\mathcal{F}}_{\hat{\xi}}$ exists and equal to $0$, which imply that $\mathcal{F}$ is regularizable and minimal.
\end{proof}
In the above proof, we have strongly used \cite[Theorem 8.3]{M6}. On the other hand, Theorem \ref{mainthm2} in the present paper is also essential to define ${\mathcal{F}}$ to be minimal.

In the Riemannian case, the minimality of fibers plays important roles in the lifting of minimal submanifolds and isoparametric submanifolds to a Hilbert space \cite{KT93, HLO06}. Then it is natural to ask whether analogous results hold in the affine case. 
To study this, it is fundamental to consider the following question, which makes sense by Theorem \ref{mainthm2}.

\begin{question}\label{que1}\normalsize
Let the notation be as in Theorem \ref{mainthm2}. Equip $N$ with the natural torsion-free connection. Take $(\hat{p}, \hat{\xi}) \in \hat{\mathcal{W}}$ and set $\xi: =d\Phi_N(\hat{\xi})$. Does the regularized trace of $A^{\hat{M}}_{\hat{\xi}}$ exist and coincide with the trace of $A_\xi^M$?
\end{question}

Note that Lemma 5.2 of \cite{HLO06} gives an affirmative answer to Question \ref{que1} when $N$ is a compact normal homogeneous space (see also \cite[Theorem 4.12]{KT93}).  

By Theorem \ref{mainthm1} we can reduce Question \ref{que1} to an analytic problem: 

\begin{question}\label{que2}\normalsize
Let $H$ be a Hilbertable space, $A: H \rightarrow H$ a regularizable compact operator and $B: H \rightarrow H$ a finite-rank operator.  
Is $A + B$ regularizable? 
Does 
$
\operatorname{tr}_r (A + B) = \operatorname{tr}_r A + \operatorname{tr} B
$ hold?
\end{question}

In fact, by Theorem \ref{mainthm1} the first term in the right hand side of \eqref{shapedecomp} is identified with $A^M_\xi ((d\Phi)_{\hat{0}} (X))$ (Remark \ref{remimpor}). This together with Proposition \ref{minimalfiber} shows that if the answer of Question \ref{que2} is yes, then so is the answer of Question \ref{que1}. Note that Lemma 4.5 of \cite{HLO06} gives an affirmative answer to Question \ref{que2} when $A$ and $B$ are \textit{self-adjoint} operators on a Hilbert space (see also \cite[Theorem 4.2]{KT93}).

\section{Regularized mean curvatures II}\label{finalsec}

In this section, we propose another definition of the mean curvatures of Fredholm submanifolds in Hilbertable spaces and study their properties.  This definition is valid for Fredholm submanifolds obtained through the parallel transport map.

In the finite dimensional case, the trace of a linear map is defined as the sum of diagonal entries of the matrix representation with respect to chosen bases and is shown to be independent of the bases. We want to do the same for compact operators on an infinite dimensional Hilbertable space. However, since the operators are not of trace class in general, we cannot expect such an independent definition. Thus we will take a basis which is compatible with the structure of the parallel transport map, define the trace as the series of  diagonal entries and show some independent property. 

To begin with, we make the following definition:
\begin{definition}\normalsize
Let $H$ be an infinite dimensional (real, separable) Hilbertable space and $A : H \rightarrow H$ a compact operator. 
Let $\mathcal{B} := \{e_j\}_{j = 1}^\infty$ be a (Schauder) basis of $H$. 
For each $j$, let $\{a_{ij} \in \mathbb{R}\}_{i = 1}^\infty$ denote the sequence satisfying $A e_j = \sum_{i = 1}^\infty a_{ij} e_i$. 
We call $\{a_{ii}\}_{i = 1}^\infty$ the \textit{diagonal entries of $A$ with respect to $\mathcal{B}$}.
If the sum
$
\sum_{i = 1}^\infty a_{ii}
$
converges, then we say that $A$ is \textit{regularizable with respect to $\mathcal{B}$}. 
Then its value is called the \textit{regularized trace of $A$ with respect to $\mathcal{B}$} and written as $\operatorname{tr}^\mathcal{B}_r A$. 
\end{definition}

Suppose that an infinite dimensional Hilbertable space $H$ is decomposed into finite dimensional subspaces:
\begin{equation*}
H = \bigoplus_{i = 1}^\infty H_i,
\end{equation*}
that is, any $v \in H$ can be written uniquely in the form $v = \sum_{i = 1}^\infty v_i$ where $v_i \in H_i$. A basis $\mathcal{B}$ of $H$ is said to be \textit{compatible with the decomposition} if $\mathcal{B}_i := H_i \cap \mathcal{B}$ is a basis of $H_i$ for each $i$. Then we have $\mathcal{B} = \bigcup_{i = 1}^\infty \mathcal{B}_i$.

Let $N = G/K$ be a reductive homogeneous space with decomposition $\mathfrak{g} = \mathfrak{k} \oplus \mathfrak{p}$ and  $\Phi_N : V_\mathfrak{g} \rightarrow N$ the parallel transport map. We fix the basis
$
\{1, \sin 2 n \pi t, \cos 2 n \pi t\}_{n = 1}^\infty
$
of $L^2([0,1], \mathbb{R})$. In other words, we fix the decomposition
\begin{equation*}
L^2([0,1], \mathbb{R})
=
\mathbb{R} \oplus \bigoplus_{n = 1}^\infty (\mathbb{R} \sin 2 n \pi t \oplus \mathbb{R} \cos 2 n \pi t).
\end{equation*}
Then we have the induced decomposition
\begin{align*}
V_\mathfrak{g}
& =
\mathfrak{g} \otimes L^2([0,1], \mathbb{R})
=
\mathfrak{g} \oplus \bigoplus_{n = 1}^\infty (\mathfrak{g} \sin 2 n \pi t \oplus \mathfrak{g} \cos 2 n \pi t)
\end{align*}
where we abbreviate $\mathfrak{g} \otimes \mathbb{R} f $ as $\mathfrak{g} f$ for $f \in L^2([0,1], \mathbb{R})$. Then by \eqref{diff7} we obtain the induced decomposition of the vertical subspace $\mathcal{V}_{\hat{0}} = \operatorname{Ker} (d \Phi_N)_{\hat{0}}$ at $\hat{0} \in V_\mathfrak{g}$:
\begin{equation}\label{decomp1234}
\mathcal{V}_{\hat{0}}
=
\mathfrak{k} 
\oplus 
\bigoplus_{n = 1}^\infty 
(
\mathfrak{g} \sin 2 n \pi t \oplus \mathfrak{g}\cos 2 n \pi t
).
\end{equation}
To extend this decomposition to the vertical subspace $\mathcal{V}_u$ at any $u \in V_\mathfrak{g}$ it is natural to consider the $P(G, G \times K)$-action, since $\Phi_{G/K}$ is an equivariant submersion \eqref{equiv3}. More precisely, we take $g \in P(G, G \times K)$ satisfying $u = g * \hat{0}$ and decompose
\begin{align}\label{decomp789}
&
\mathcal{V}_{u}
=
\operatorname{Ad}(g)\mathfrak{k}  \oplus \bigoplus_{n = 1}^\infty 
(
\operatorname{Ad}(g)(\mathfrak{g} \sin 2 n \pi t) \oplus \operatorname{Ad}(g)(\mathfrak{g}\cos 2 n \pi t)
).
\end{align}
This decomposition does not depend on the choice of $g$.

Let $N$ be as above and $\varphi : M \rightarrow N$ an immersed submanifold with transversal bundle $\mathcal{W}$. By Theorem \ref{mainthm2} the lifted submanifold $\hat{\varphi} : \hat{M} \rightarrow V_\mathfrak{g}$ with lifted transversal bundle $\hat{\mathcal{W}}$ is Fredholm. 
For $\hat{p} = (p, u) \in \hat{M}$ we have the isomorphism $T_{\hat{p}} \hat{M} \cong (T_p M\hat{)} \oplus \mathcal{V}_u$. This together with \eqref{decomp789} shows:
\begin{align} \label{decomp90}
T_{\hat{p}} \hat{M}
\cong
(T_p M\hat{)} \oplus 
\operatorname{Ad}(g)\mathfrak{k}  \oplus \bigoplus_{n = 1}^\infty 
(
\operatorname{Ad}(g)(\mathfrak{g} \sin 2 n \pi t) \oplus \operatorname{Ad}(g)(\mathfrak{g}\cos 2 n \pi t)
).
\end{align}
\begin{theorem}\label{mainthm3}\normalsize
Let $N = G/K$ be a reductive homogeneous space with decomposition $\mathfrak{g} = \mathfrak{k} \oplus \mathfrak{p}$ and with natural torsion-free connection $\nabla^N$ and $\varphi : M \rightarrow N$ be an immersed submanifold with transversal bundle $\mathcal{W}$. Take $(\hat{p}, \hat{\xi}) \in \hat{\mathcal{W}}$. 
Let $\mathcal{B}_{\hat{p}}$ be any basis of $T_{\hat{p}} \hat{M}$ compatible with the decomposition \eqref{decomp90}. Then the regularized trace of the shape operator $A^{\hat{M}}_{\hat{\xi}}$ with respect to $\mathcal{B}_{\hat{p}}$ exists, does not depend on the choice of $\mathcal{B}_{\hat{p}}$ and coincides with the trace of the shape operator $A^M_\xi$ where $\xi := d \Phi_N(\hat{\xi})$.
\end{theorem}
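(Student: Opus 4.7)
The plan is to compute the sum of diagonal entries of $A^{\hat M}_{\hat \xi}$ block by block with respect to the decomposition \eqref{decomp90}, showing that the $(T_pM\hat{)}$-block contributes $\operatorname{tr}A^M_\xi$ while every vertical block contributes zero. First, by the equivariance \eqref{equiv3} together with Theorem \ref{mainthm1}, I would reduce to $\hat p=(eK,\hat 0)$ so that \eqref{decomp90} collapses to \eqref{decomp1234}. Applying \eqref{shapedecomp}, Remark \ref{remimpor} identifies the first summand $A^{\hat M}_{\hat\xi}(X^{\mathcal H})^{\mathcal H}$ with the horizontal lift of $A^M_\xi(d\Phi_N(X^{\mathcal H}))$, while the last summand coincides with $A^{\mathcal F}_{\hat\xi}(X^{\mathcal V})$ for $\mathcal F$ the fiber of $\Phi_N$ through $\hat 0$ (using $\mathcal F\subset\hat M$ and transversality of $\hat\xi\in\hat{\mathfrak p}$ to $\mathcal F$). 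The two cross summands map $\mathcal H$ into $\mathcal V$ or $\mathcal V$ into $\mathcal H$, hence do not contribute to diagonal entries with respect to $\mathcal B_{\hat p}$. Summing diagonal entries over a basis of $(T_pM\hat{)}$ yields $\operatorname{tr}A^M_\xi$ via horizontal lift.

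The crux is to show that the diagonal sum of $A^{\mathcal F}_{\hat\xi}$ on each finite-dimensional vertical block vanishes. To this end I would extend \eqref{shapeop9} to $\mathcal F$ using the simply transitive $P(G,\{e\}\times K)$-action: the equivariant extension $\mathcal E(\hat\xi)(g*\hat 0):=\operatorname{Ad}(g)\hat\xi$ is a well-defined section of the transversal bundle (relying on $\operatorname{Ad}(K)\mathfrak p\subset\mathfrak p$). Differentiating along $(\exp^{\mathcal G}sZ)*\hat 0$ for $Z\in\operatorname{Lie}P(G,\{e\}\times K)$ and projecting via $V_{\mathfrak g}=\hat{\mathfrak p}\oplus\mathcal V_{\hat 0}$ yields
\begin{equation*}
A^{\mathcal F}_{\hat\xi}(Z')=[Z,\hat\xi]-\widehat{\int_0^1[Z(t),\xi]_{\mathfrak p}\,dt}.
\end{equation*}
For each basis vector $e$ of \eqref{decomp1234} the unique primitive with $Z(0)=0$ is $Z(t)=\int_0^t e(s)\,ds$, which automatically satisfies $Z(1)\in\mathfrak k$ since $e\in\mathcal V_{\hat 0}$. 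A case-by-case verification then shows the diagonal contribution on each block vanishes: for $e=Y\in\mathfrak k$ with $Z(t)=tY$ the constant Fourier mode of $A^{\mathcal F}_{\hat\xi}(Y)$ is $\tfrac{1}{2}([Y,\xi]-[Y,\xi]_{\mathfrak p})=\tfrac{1}{2}[Y,\xi]_{\mathfrak k}$, which vanishes by $[\mathfrak k,\mathfrak p]\subset\mathfrak p$; for $e=Y\sin 2n\pi t$ (resp.\ $Y\cos 2n\pi t$) the primitive $\tfrac{1}{2n\pi}Y(1-\cos 2n\pi t)$ (resp.\ $\tfrac{1}{2n\pi}Y\sin 2n\pi t$) produces an output whose $\sin 2n\pi t$- (resp.\ $\cos 2n\pi t$-) Fourier coefficient is zero.

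Finally, basis independence and convergence are automatic: since each summand of \eqref{decomp90} is finite-dimensional, the sum of diagonal entries over any basis of a given block equals the finite-dimensional trace of the corresponding projection of $A^{\hat M}_{\hat\xi}$ and is therefore independent of the chosen basis, and the whole series terminates after finitely many nonzero terms. The main obstacle I foresee is the shape-operator computation on $\mathcal F$ together with the Fourier-mode cancellations, where the reductive hypothesis $[\mathfrak k,\mathfrak p]\subset\mathfrak p$ enters twice — once to ensure that $\mathcal E(\hat\xi)$ is transversal to $\mathcal F$, and once to kill the $\mathfrak k$-block contribution.
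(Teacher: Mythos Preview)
Your proposal is correct and follows essentially the same approach as the paper: reduce to $\hat p=(eK,\hat 0)$ by equivariance, split $A^{\hat M}_{\hat\xi}$ via \eqref{shapedecomp} so that the horizontal--horizontal block gives $\operatorname{tr}A^M_\xi$ (Remark~\ref{remimpor}) and the cross blocks contribute nothing, derive the fiber shape-operator formula \eqref{shapeop11}, and then verify on each Fourier block $\mathfrak k$, $\mathfrak g\sin 2n\pi t$, $\mathfrak g\cos 2n\pi t$ that the diagonal entries vanish (the paper packages these computations as Lemmas~\ref{lemtrace7} and~\ref{corfinal}). Your remark that $[\mathfrak k,\mathfrak p]\subset\mathfrak p$ is what kills the $\mathfrak k$-block contribution is exactly the mechanism the paper uses.
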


\begin{remark}\label{main3remark}\normalsize
Theorem \ref{mainthm3} can be thought of an extension of the results of King-Terng \cite[Theorem 4.12]{KT93} when $N$ is compact, of Heintze-Liu-Olmos \cite[Lemma 5.2]{HLO06} in the case of Riemannian submersions and of Koike (\cite[Theorem 4.1]{Koi02}, \cite[Theorem C]{Koi04}) when $M$ is a curvature-adapted submanifold of a Riemannian symmetric space $N$ of compact or non-compact type. On the other hand, our definition of the regularized trace of the shape operator is valid only for the lifted submanifold $\hat{M}$ of $V_\mathfrak{g}$. 
\end{remark}

We first consider the case of the fiber $\mathcal{F} := \mathcal{F}_{\hat{0}} = \Phi_{N}^{-1}(eK)$. By Proposition \ref{principalbdl5} and \eqref{deriv6} we have
\begin{equation*}
T_{\hat{0}} \mathcal{F}
=
\{Z' \mid Z \in \operatorname{Lie} P(G, \{e\} \times K) \}
\end{equation*}
where $\operatorname{Lie} P(G, \{e\} \times K) := \{Z \in H^1([0,1], \mathfrak{g}) \mid  Z(0)= 0, \ Z(1) \in \mathfrak{k}\}$. By calculations similar to those in Section \ref{compactshape} we have 
\begin{equation}\label{shapeop11}
A_{\hat{\xi}}^{\mathcal{F}} (Z')
=
 [Z, \hat{\xi}] - \int_0^1 \left[Z(t), \xi \right]_{\mathfrak{p}} dt
\end{equation}
where $\xi \in \mathfrak{p}$ and $Z \in \operatorname{Lie} P(G, \{e\} \times K)$.
\begin{lemma}\label{lemtrace7}
Let $x \in \mathfrak{k}$ and $y \in \mathfrak{g}$. Then 
\begin{enumerate}
\item $A^{\mathcal{F}}_{\hat{\xi}}(x)= \left(t - \frac{1}{2}\right) [x, \xi]$.

\item $A^{\mathcal{F}}_{\hat{\xi}}(y \sin 2 n \pi t) = \frac{1}{2n \pi} ([y, \xi]_\mathfrak{k}  - [y, \xi] \cos 2 n \pi t)$.

\item $A^{\mathcal{F}}_{\hat{\xi}}(y \cos 2 n \pi t) = \frac{1}{2n \pi} [y, \xi] \sin 2 n \pi t$.
\end{enumerate}
\end{lemma}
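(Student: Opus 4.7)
The plan is a direct application of formula \eqref{shapeop11}. For each tangent vector appearing on the left-hand side of the three claimed identities, I would exhibit an explicit primitive $Z \in \operatorname{Lie} P(G, \{e\}\times K) = \{Z \in H^1([0,1],\mathfrak{g}) \mid Z(0) = 0,\ Z(1) \in \mathfrak{k}\}$ with the prescribed $Z'$, then evaluate the two terms $[Z,\hat\xi]$ and $\int_0^1 [Z(t),\xi]_{\mathfrak{p}}\,dt$ separately.

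The natural choices are: for (i), $Z(t) = tx$, which evidently satisfies $Z(0) = 0$ and $Z(1) = x \in \mathfrak{k}$; for (ii), $Z(t) = \tfrac{y}{2n\pi}(1 - \cos 2n\pi t)$, so that $Z(0) = Z(1) = 0 \in \mathfrak{k}$ (using $\cos 2n\pi = 1$); and for (iii), $Z(t) = \tfrac{y}{2n\pi}\sin 2n\pi t$, which likewise satisfies $Z(0) = Z(1) = 0$. With these in hand, the pointwise bracket $[Z(t),\hat\xi]$ is immediate, and the integral term reduces to multiplying $[y,\xi]_{\mathfrak{p}}$ or $[x,\xi]_{\mathfrak{p}}$ by one of the elementary averages $\int_0^1 t\,dt = \tfrac{1}{2}$, $\int_0^1 \cos 2n\pi t\,dt = 0$, $\int_0^1 \sin 2n\pi t\,dt = 0$.

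The one substantive point is the role of the $\mathfrak{p}$-projection. In (i), the $\operatorname{Ad}(K)$-invariance of $\mathfrak{p}$ gives $[\mathfrak{k},\mathfrak{p}] \subset \mathfrak{p}$, hence $[x,\xi]_{\mathfrak{p}} = [x,\xi]$, and the two contributions combine into the stated $(t-\tfrac{1}{2})[x,\xi]$. In (ii), the $\mathfrak{p}$-projection produces a constant term $\tfrac{1}{2n\pi}([y,\xi] - [y,\xi]_{\mathfrak{p}}) = \tfrac{1}{2n\pi}[y,\xi]_{\mathfrak{k}}$, which is precisely the form on the right. In (iii), since the integrand is an odd trigonometric function on a full period the integral vanishes, leaving only the bracket term.

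I do not anticipate any real obstacle: everything reduces to trivial trigonometric integrals, the primitives are forced up to boundary conditions, and the boundary conditions at $t=1$ hold automatically in each case. The proof will consist of three short computations presented in parallel.
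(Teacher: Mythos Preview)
Your proposal is correct and follows exactly the paper's own proof: the same three primitives $Z(t)=tx$, $Z(t)=\tfrac{y}{2n\pi}(1-\cos 2n\pi t)$, and $Z(t)=\tfrac{y}{2n\pi}\sin 2n\pi t$ are plugged into \eqref{shapeop11}, and the same observation $[\mathfrak{k},\mathfrak{p}]\subset\mathfrak{p}$ handles the projection in (i). Nothing needs to be added.
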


\begin{proof}\normalsize
(i): Set $Z = t x$. Then $Z \in \operatorname{Lie} P(G, \{e\} \times K)$ and $Z' = x$. Thus
\begin{align*}
\textstyle
A^{\mathcal{F}}_{\hat{\xi}}(x)
&
\textstyle
= 
[ t x, \hat{\xi}] - \int_0^1 t [x, \xi]_{\mathfrak{p}} dt
=
t [x, \xi] - \frac{1}{2} [x, \xi]_{\mathfrak{p}}
=
\left(t - \frac{1}{2}\right) [x, \xi] 
\end{align*}
since $[\mathfrak{k}, \mathfrak{p}] \subset \mathfrak{p}$.
(ii): Set $Z = \frac{1}{2n \pi} y (1 - \cos 2 n \pi t)$. Then $Z \in \operatorname{Lie} P(G, \{e\} \times K)$ and $Z' = y \sin 2 n \pi t$. Thus
\begin{align*}
\textstyle
A^{\mathcal{F}}_{\hat{\xi}}(y \sin 2 n \pi t) 
&
\textstyle
= 
\frac{1}{2n \pi} [y, \xi](1 - \cos 2 n \pi t)  - \frac{1}{2n \pi} [y, \xi]_\mathfrak{p} \int_0^1(1 - \cos 2 n \pi t)  dt 
\\
&
\textstyle
=
\frac{1}{2n \pi} ([y, \xi]_\mathfrak{k}  - [y, \xi] \cos 2 n \pi t).
\end{align*}
(iii): Set $Z = \frac{1}{2n \pi} y \sin 2 n \pi t$. Then $Z \in \operatorname{Lie} P(G, \{e\} \times K)$ and $Z' = y \cos 2n \pi t$. Thus
\begin{align*}
\textstyle
A^{\mathcal{F}}_{\hat{\xi}}(y \cos 2n \pi t) 
&
\textstyle
= 
\frac{1}{2n \pi} [y, \xi] \sin 2 n \pi t  - \frac{1}{2n \pi} [y, \xi]_\mathfrak{p} \int_0^1 \sin 2 n \pi t  dt 
\\
&
\textstyle
=
\frac{1}{2n \pi} [y, \xi] \sin 2 n \pi t.
\end{align*}
This proves the lemma.
\end{proof}

\begin{lemma}\label{corfinal}\normalsize
Let $\mathcal{B}^\mathcal{F}$ be any basis of $\mathcal{V}_{\hat{0}} = T_{\hat{0}} \mathcal{F}$ compatible with the decomposition \eqref{decomp1234}. Then any diagonal entry of $A^{\mathcal{F}}_{\hat{\xi}} : T_{\hat{0}} \mathcal{F} \rightarrow T_{\hat{0}} \mathcal{F}$ with respect to $\mathcal{B}^\mathcal{F}$ is $0$.
\end{lemma}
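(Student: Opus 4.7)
The plan is to use Lemma \ref{lemtrace7} directly. Since $\mathcal{B}^\mathcal{F}$ is compatible with \eqref{decomp1234}, each basis vector $e \in \mathcal{B}^\mathcal{F}$ lies in exactly one summand of the decomposition; call it $S(e)$. The diagonal entry of $A^\mathcal{F}_{\hat{\xi}}$ corresponding to $e$ is the coefficient of $e$ when $A^\mathcal{F}_{\hat{\xi}}(e)$ is expanded in $\mathcal{B}^\mathcal{F}$, which is the coefficient of $e$ in the projection of $A^\mathcal{F}_{\hat{\xi}}(e)$ onto $S(e)$. Hence it suffices to check, case by case according to the three possible types of $S(e)$, that this projection already vanishes, regardless of the specific basis chosen within $S(e)$.

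First, for a basis vector $x \in \mathfrak{k}$, part (i) of Lemma \ref{lemtrace7} gives $A^\mathcal{F}_{\hat{\xi}}(x) = (t - \tfrac{1}{2})[x,\xi]$. The constant term of the Fourier expansion of $t - \tfrac{1}{2}$ in the basis $\{1, \sin 2n\pi t, \cos 2n\pi t\}_{n \geq 1}$ vanishes since $\int_0^1 (t - \tfrac{1}{2})\,dt = 0$; hence the image lies in $\bigoplus_{n \geq 1}(\mathfrak{g} \sin 2n\pi t \oplus \mathfrak{g} \cos 2n\pi t)$, whose intersection with $\mathfrak{k}$ is trivial. For a basis vector of the form $y \sin 2n\pi t$, part (ii) places the image in $\mathfrak{k} \oplus (\mathfrak{g} \cos 2n\pi t)$, which intersects $\mathfrak{g} \sin 2n\pi t$ trivially. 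For $y \cos 2n\pi t$, part (iii) places the image inside $\mathfrak{g} \sin 2n\pi t$, disjoint from $\mathfrak{g} \cos 2n\pi t$. In each case the projection onto $S(e)$ is zero, so the diagonal entry vanishes.

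There is no genuine obstacle: the entire content of the lemma is the pleasant structural observation that $A^\mathcal{F}_{\hat{\xi}}$ sends every summand of \eqref{decomp1234} into other summands. The vanishing of the constant Fourier coefficient of $t - \tfrac{1}{2}$ plus the elementary bracket computations in Lemma \ref{lemtrace7} conspire to make this automatic, and this off-diagonal shifting is precisely what makes the conclusion independent of how one chooses a basis within each summand -- which is exactly the role played by the compatibility hypothesis on $\mathcal{B}^\mathcal{F}$.
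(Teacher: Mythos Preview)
Your proof is correct and follows essentially the same approach as the paper: both use Lemma \ref{lemtrace7} together with $\int_0^1(t-\tfrac{1}{2})\,dt=0$ to show that $A^{\mathcal{F}}_{\hat{\xi}}$ maps each summand of \eqref{decomp1234} into a complementary sum of summands, forcing every diagonal entry to vanish. The only cosmetic difference is that the paper names the target of $A^{\mathcal{F}}_{\hat{\xi}}(\mathfrak{k})$ as $\operatorname{Ker}(d\Phi)_{\hat{0}}=T_{\hat{0}}F$ rather than writing out the Fourier summands explicitly.
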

\begin{proof}\normalsize
Lemma \ref{lemtrace7} together with $\int_0^1 (t-\frac{1}{2}) dt = 0$ implies $A^\mathcal{F}_{\hat{\xi}}(\mathfrak{k}) \subset \operatorname{Ker} (d \Phi)_{\hat{0}} = T_{\hat{0}} F$. 
Moreover, by Lemma \ref{lemtrace7} we have $A^\mathcal{F}_{\hat{\xi}}(\mathfrak{g} \sin 2 n \pi t) \subset \mathfrak{k} \oplus (\mathfrak{g} \cos 2 n \pi t)$
and $A^\mathcal{F}_{\hat{\xi}}(\mathfrak{g} \cos 2 n \pi t) \subset \mathfrak{g} \sin 2 n \pi t$. This proves the lemma.
\end{proof}

We are now in a position to prove Theorem \textup{\ref{mainthm3}}.

\begin{proof}[Proof of Theorem \textup{\ref{mainthm3}}]\normalsize
Since the assertion is local, we may assume that $M$ is embedded in $N$.
Since the decomposition \eqref{decomp789} and the horizontal distribution \eqref{hdist98} are invariant under the $P(G, G \times K)$-action, we may assume $u = \hat{0}$ without loss of generality. Consider the decomposition \eqref{shapedecomp} which we simply rewrite as
\begin{equation*}
(A :=) \ 
A^{\hat{M}}_{\hat{\xi}}
\ \cong \ 
A_{11} + A_{21} + A_{12} + A_{22}.
\end{equation*}
By Theorem \ref{mainthm1}, $A_{11}$ is identified with $A^M_\xi$ (Remark \ref{remimpor}). 
By definition, $A_{22}$ is identified with $A^{\mathcal{F}}_{\hat{\xi}}$. 
We can ignore $A_{12}$ and $A_{21}$ since they are linear maps between horizontal and vertical subspaces. 
Thus, by Lemma \ref{corfinal} the regularized trace of $A$ with respect to $\mathcal{B}_{\hat{p}}$ exists, equal to $\operatorname{tr} A^M_\xi$ and does not depend on the choice of  $\mathcal{B}_{\hat{p}}$.  This completes the proof.
\end{proof}

\begin{definition}\normalsize
Let the notation be as in Theorem \ref{mainthm3}. Then the \textit{regularized mean curvature} of the Fredholm submanifold $\hat{M}$ of $V_\mathfrak{g}$ in the direction of $\hat{\xi}$ is defined as the regularized trace of $A^{\hat{M}}_{\hat{\xi}}$ with respect to $\mathcal{B}_{\hat{p}}$. We say that $\hat{M}$ is \textit{minimal} if for each $(\hat{p}, \hat{\xi}) \in \hat{\mathcal{W}}$ the regularized mean curvature of $\hat{M}$ in the direction of  $\hat{\xi}$ vanishes.
\end{definition}

From Theorem \ref{mainthm3} we have:

\begin{corollary}\label{minimalfiber89}\normalsize
The Fredholm submanifold $\hat{M}$ of $V_\mathfrak{g}$ is minimal if and only if the submanifold $M$ of the reductive homogeneous space $N = G/K$ is minimal. In particular, each fiber of the parallel transport map $\Phi_N : V_\mathfrak{g} \rightarrow N$ is minimal.
\end{corollary}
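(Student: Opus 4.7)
The plan is to derive Corollary \ref{minimalfiber89} as an almost immediate consequence of Theorem \ref{mainthm3}, supplemented by the trivial observation that a single point is a (degenerate) minimal submanifold.

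First I would verify the equivalence of minimality. By the definition preceding the corollary, $\hat{M}$ is minimal precisely when, for every $(\hat{p},\hat{\xi})\in\hat{\mathcal{W}}$, the regularized trace $\operatorname{tr}^{\mathcal{B}_{\hat{p}}}_r A^{\hat{M}}_{\hat{\xi}}$ with respect to a basis compatible with the decomposition \eqref{decomp90} vanishes. Theorem \ref{mainthm3} identifies this number with $\operatorname{tr} A^{M}_{\xi}$ for $\xi := d\Phi_{N}(\hat{\xi})$, and asserts its independence from the choice of such a basis. Since by construction $\hat{\mathcal{W}}$ is the horizontal lift of $\mathcal{W}$, the map $\hat{\mathcal{W}} \to \mathcal{W}$, $(\hat{p},\hat{\xi})\mapsto(p,\xi)$, is surjective: given $(p,\xi) \in \mathcal{W}$, one picks any $u$ with $\Phi_{N}(u) = \varphi(p)$ and lifts $\xi$ horizontally at $u$. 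Consequently, $\hat{M}$ is minimal if and only if $\operatorname{tr} A^{M}_{\xi} = 0$ for every $(p,\xi) \in \mathcal{W}$, that is, if and only if $M$ is minimal.

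For the \emph{in particular} statement, I would observe that an arbitrary fiber $\mathcal{F}_u = \Phi_N^{-1}(p)$ of $\Phi_N$, where $p := \Phi_N(u)$, coincides with the lifted submanifold $\hat{M}$ associated to the $0$-dimensional embedded submanifold $M := \{p\}$ of $N$, taken with transversal bundle $\mathcal{W} := T_{p}N$. For such $M$ the shape operator $A^{M}_{\xi}$ is a linear endomorphism of the zero space $T_{p}M = 0$, whose trace is vacuously zero, so $M$ is trivially minimal. The previous step then yields minimality of $\mathcal{F}_u$.

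Given the depth already invested in Theorem \ref{mainthm3}, there is no genuine obstacle here. The only point that merits attention is the bookkeeping in the second step: one must confirm that treating $\{p\}$ as a $0$-dimensional submanifold of $N$ with transversal bundle $T_{p}N$ legitimately satisfies the hypothesis "immersed submanifold with transversal bundle" in Theorem \ref{mainthm3}, and that its horizontal lift $\hat{M}$ in the sense of that theorem genuinely reproduces the fiber $\mathcal{F}_u$, so that the conclusion transfers without modification.
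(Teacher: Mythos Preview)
Your proposal is correct and matches the paper's approach: the paper simply states that the corollary follows from Theorem \ref{mainthm3} with no further argument, and your first paragraph supplies exactly the obvious details behind that implication. For the ``in particular'' clause, your device of taking $M=\{p\}$ with $\mathcal{W}=T_pN$ is a clean way to read off the fiber case from Theorem \ref{mainthm3}; alternatively (and equivalently) one can note that the fiber case is already contained in Lemma \ref{corfinal}, since there the horizontal summand $(T_pM\hat{)}$ is absent and all diagonal entries of $A^{\mathcal{F}}_{\hat{\xi}}$ vanish.
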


\bigskip

\subsection*{Acknowledgements}
The author would like to thank Professors Yoshihiro Ohnita, Hiroshi Tamaru and Takashi Sakai for their interests in his work and constant supports. 
The author is also grateful to Professors E. Heintze, J.-H. Eschenburg and P. Quast for valuable comments and their hospitality during his visit to the University of Augsburg in July 2025.

\end{document}